\newtheorem{thm}{Theorem}[section]
\newtheorem{pro}[thm]{Proposition}
\newtheorem{lem}[thm]{Lemma}
\newtheorem{cor}[thm]{Corollary}
\newtheorem{defn}[thm]{Definition}
\newtheorem*{rem*}{Remarks}
\newtheorem{rems}[thm]{Remark}
\newtheorem*{conj*}{Conjecture}
\DeclareMathOperator{\C}{\mathbb{C}}
\DeclareMathOperator{\Q}{\mathbb{Q}}
\DeclareMathOperator{\R}{\mathbb{R}}
\DeclareMathOperator{\Z}{\mathbb{Z}}
\DeclareMathOperator{\Spec}{Spec}
\newcommand{\mrm}{\mathrm}
\newcommand{\mbb}{\mathbb}
\title{On the residue of Eisenstein classes of Siegel varieties}
\author{Francesco Lemma}
\address{Institut math\'ematique de Jussieu-Paris Rive Gauche, UMR 7586, B\^atiment Sophie Germain, Case 7012, 75205 Paris Cedex 13}
\email{francesco.lemma@imj-prg.fr}
\begin{document}

\begin{abstract} Eisenstein classes of Siegel varieties are motivic cohomology classes defined as pull-backs by torsion sections of the polylogarithm prosheaf on the universal abelian scheme. By reduction to the Hilbert-Blumenthal case, we prove that the Betti realization of these classes on Siegel varieties of arbitrary genus have non-trivial residue on zero dimensional strata of the Baily-Borel compactification. A direct corollary is the non-vanishing of a higher regulator map.
\end{abstract}

\maketitle

\tableofcontents

\section{Introduction} 

The explicit construction of motivic cohomology classes, and the computation of their images under regulator maps, is one of the main ingredients of most of the proofs of  the conjectures of Beilinson and Bloch-Kato on special values of $L$-functions. Polylogarithms, which have been defined by Beilinson-Levin for relative curves \cite{beilinson-levin1}, \cite{beilinson-levin2} and by Wildeshaus for abelian schemes \cite{wildeshaus}, are one interesting source of such cohomology classes. For abelian schemes, the polylogarithm is a prosheaf on the complement of the zero section. By pulling back the polylogarithm along a non-zero torsion section, one gets some cohomology classes, the so-called Eisenstein classes, on the base of the abelian scheme. In the elliptic case, these classes have been studied intensively. In Deligne cohomology, they have been explicitly described by some real analytic Eisenstein series by Beilinson-Levin \cite{beilinson-levin1}; in \'etale cohomology they are closely related to the Kato-Siegel units giving rise to Kato's Euler system, as shown by Kings \cite{kings3}; in syntomic cohomology, they are described by Katz's measure \cite{bannai-kings}. However, very little is known about the Eisenstein classes for abelian schemes of dimension $> 1$. Blotti\`ere showed in his thesis \cite{blottiere1} that Levin's currents described the topological realization of the polylogarithm of abelian schemes. He also deduced from this result that, in the Hilbert-Blumenthal case, the residue of the Eisenstein classes at the cusps of the Baily-Borel compactification are described by special values of the zeta function of the underlying totally real field \cite{blottiere}.\\

In this article, we consider Eisenstein classes of Siegel varieties of arbitrary genus $g$. The boundary of the Baily-Borel compactification of these Shimura varieties is stratified by Siegel varieties of genus $0 \leq g' \leq g-1$. We show that there exists a torsion section of the universal abelian scheme such that the associated Eisenstein class has non-zero residue at the zero dimensional strata of the boundary. This is the first non-vanishing result for Eisenstein classes of Siegel varieties of genus $g>1$. The proof does not rely on a complicated residue computation but rather on the compatibility with base change of the polylogarithm prosheaf, which implies a similar property for the Eisenstein classes. The idea is that by embedding a Hilbert-Blumenthal variety on the considered Siegel variety, the non-vanishing of the residue is reduced to a consequence of Blotti\`ere's result. We would like to mention that arithmetic Eisenstein classes on the Siegel space have been considered by Faltings \cite{faltings}.\\

\textbf{Aknowledgements.} It is a pleasure to thank  David Blotti\`ere for his interest and J\"org Wildeshaus for correspondence which gave rise to this work.

\section{Eisenstein classes}

The setting is the following. The schemes that we consider are $\C$-schemes. If $X$ is such a scheme, we work with the abelian category of sheaves of $\Q$-modules $\mrm{Sh}(X)$ on the usual analytic topology on the analytification $X^{an}$ and the full subcategory $D^b_c(X, L)$ of its derived category whose objects are complexes whose cohomology is constructible. There is a full 6 functor formalism on the categories $D^b_c(X, L)$, see for example \cite{kashiwara-schapira}. Cohomology is Betti cohomology.\\

Let $\pi: A \rightarrow S$ be an abelian scheme with zero section $e: S \rightarrow A$. Let us denote by $\mathcal{H}$ the local system $\mathcal{H}=\underline{\mathrm{Hom}}(R^1\pi_*\Q, \Q)$. 

\subsection{The logarithm prosheaf} By the Leray spectral sequence for the composite functor
$$
\mrm{RHom}_{\mrm{Sh}(S)}(\Q(0),\,\,\,) \circ \pi_*
$$
applied to $\pi^* \mathcal{H}$, we have the exact sequence
$$
0 \rightarrow \mathrm{Ext}^1_{S}(\Q, \mathcal{H}) \rightarrow \mathrm{Ext}^1_{A}(\Q, \pi^*\mathcal{H}) \rightarrow \mathrm{Hom}_{S}(\Q, R^1\pi_*\pi^*\mathcal{H}) \rightarrow 0.
$$
Note that this exact sequence is split by $e^*$ and that the right hand term is isomorphic to $\mathrm{Hom}_{S}(\mathcal{H}, \mathcal{H})$ because of the projection formula $R^1\pi_*\pi^*\mathcal{H} \simeq R^1\pi_*\Q \otimes \mathcal{H}.$ Let us denote by $\delta: \mathrm{Ext}^1_{S}(\Q, \pi^*\mathcal{H}) \rightarrow \mathrm{Hom}_{S}(\mathcal{H}, \mathcal{H})$ the right hand morphism of the above exact sequence.

\begin{pro} \label{carac-log} Up to isomorphism in $\mrm{Sh}(A)$, there exists a unique local system $\mathrm{Log}^{(1)}_A$ which is an extension
$$
0 \rightarrow \pi^*\mathcal{H} \rightarrow \mathrm{Log}^{(1)}_A \rightarrow \Q \rightarrow 0
$$
and whose equivalence class $[\mathrm{Log}^{(1)}_A] \in \mathrm{Ext}^1_{A}(\Q(0), \pi^*\mathcal{H})$ is such that
$e^*[\mathrm{Log}^{(1)}_A]=0$ and $\delta[\mathrm{Log}^{(1)}_A]=\mrm{id}_{\mathcal{H}}$.
\end{pro}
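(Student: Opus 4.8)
The plan is to read everything off the split exact sequence displayed just above the statement, so that the proposition reduces to a short computation in these $\mathrm{Ext}$-groups. First I would record that, since $\pi \circ e = \mathrm{id}_S$, one has $e^* \circ \pi^* = \mathrm{id}$, so the pullback $e^*$ is a retraction of the injection $\pi^* : \mathrm{Ext}^1_{S}(\Q, \mathcal{H}) \hookrightarrow \mathrm{Ext}^1_{A}(\Q, \pi^*\mathcal{H})$ that appears in the sequence. Consequently
$$\mathrm{Ext}^1_{A}(\Q, \pi^*\mathcal{H}) = \pi^*\mathrm{Ext}^1_{S}(\Q, \mathcal{H}) \oplus \ker(e^*),$$
and since exactness gives $\ker(\delta) = \pi^*\mathrm{Ext}^1_{S}(\Q, \mathcal{H})$ with $\delta$ surjective, the restriction $\delta|_{\ker(e^*)} : \ker(e^*) \to \mathrm{Hom}_{S}(\mathcal{H}, \mathcal{H})$ is an isomorphism. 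Hence there is exactly one class $\kappa \in \mathrm{Ext}^1_{A}(\Q, \pi^*\mathcal{H})$ with $e^*\kappa = 0$ and $\delta\kappa = \mathrm{id}_{\mathcal{H}}$, namely $\kappa = (\delta|_{\ker(e^*)})^{-1}(\mathrm{id}_{\mathcal{H}})$.

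Next I would pass from the cohomology class to an honest sheaf. Choose any Yoneda extension $0 \to \pi^*\mathcal{H} \to \mathrm{Log}^{(1)}_A \to \Q \to 0$ representing $\kappa$; such an extension exists because $\mathrm{Ext}^1$ classifies them. To see that $\mathrm{Log}^{(1)}_A$ is a local system, and not merely a sheaf of $\Q$-modules, note that $A^{an}$ is a complex manifold, hence has a basis of contractible open subsets $U$; over such a $U$ both $\pi^*\mathcal{H}|_U$ and $\Q|_U$ are constant of finite rank and $\mathrm{Ext}^1_{\mathrm{Sh}(U)}$ between them vanishes, so $\mathrm{Log}^{(1)}_A|_U$ is constant. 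This gives existence. For uniqueness, if $L$ and $L'$ are two local systems fitting in extensions of $\Q$ by $\pi^*\mathcal{H}$ with $e^*[L] = e^*[L'] = 0$ and $\delta[L] = \delta[L'] = \mathrm{id}_{\mathcal{H}}$, then $[L] = [L'] = \kappa$ by the uniqueness of $\kappa$, hence $L$ and $L'$ are Yoneda-equivalent, and a Yoneda equivalence of extensions is in particular an isomorphism in $\mathrm{Sh}(A)$.

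I do not expect a genuine obstacle here: the whole argument is formal once the split exact sequence and the identification of its right-hand term with $\mathrm{Hom}_{S}(\mathcal{H}, \mathcal{H})$ (via the projection formula) are granted, which they are in the excerpt. The only points I would be careful about are bookkeeping ones, namely that ``$e^*$ splits the sequence'' really provides a retraction of $\pi^*$, so that $\ker(e^*)$ is a genuine complement of $\ker(\delta)$ inside $\mathrm{Ext}^1_{A}(\Q, \pi^*\mathcal{H})$; and that the chosen Yoneda extension is locally constant rather than just an extension of abelian sheaves. Neither step requires a residue computation or any explicit knowledge of $\mathcal{H}$.
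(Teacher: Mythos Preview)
Your proposal is correct and is exactly the argument the paper has in mind: the paper's own proof consists of the single word ``Trivial'', relying on the split short exact sequence and the identification of the right-hand term with $\mathrm{Hom}_S(\mathcal{H},\mathcal{H})$ that are recorded immediately before the statement. You have simply spelled out those formal steps (the decomposition $\mathrm{Ext}^1_A(\Q,\pi^*\mathcal{H})=\pi^*\mathrm{Ext}^1_S(\Q,\mathcal{H})\oplus\ker(e^*)$, the resulting bijection $\delta|_{\ker(e^*)}$, and the local constancy of the representing extension), which is precisely what ``Trivial'' stands for here.
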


\begin{proof} Trivial.
\end{proof}

\begin{rems} There exists a variation of Hodge structure whose underlying local system is $\mathrm{Log}^{(1)}_A$ and which is characterised up tu \textit{unique} isomorphism by properties analogous to the ones given in the above result (see \cite{blottiere1} 3.1 and 3.2). We will not need this fact in this paper.
\end{rems}

\begin{defn} \label{def-log} Let $\mathrm{Log}^{(k)}_A$ be the local system $\mathrm{Log}^{(k)}_A=\mathrm{Sym}^k \mathrm{Log}^{(1)}_A$.The logarithm sheaf is the pro-local system
$$
\mathrm{Log}_A=\underleftarrow{\lim} \mathrm{Log}^{(k)}_A
$$
where the transition maps are induced by the map $\mathrm{Log}^{(1)}_A \rightarrow \Q$. In particular, one has exact sequences
$$
0 \rightarrow \pi^* \mathrm{Sym}^k\mathcal{H} \rightarrow \mathrm{Log}^{(k)}_A \rightarrow \mathrm{Log}^{(k-1)}_A \rightarrow 0
$$
and a splitting induced by $s: e^*\Q \rightarrow e^* \mathrm{Log}^{(1)}_A$ given by
$$
e^*\mathrm{Log}_A \simeq \prod_{k \geq 0} \mathrm{Sym}^k \mathcal{H}.
$$
\end{defn}

\begin{rems} The more conceptual definition of the prosheaf $\mathrm{Log}_A$ via a universal property (see \cite{wildeshaus} I. Thm. 3.3 and Thm. 3.5) coincides with ours according to \cite{blottiere1} Prop. 3.13.
\end{rems}

\begin{lem} \label{splitting}  For every torsion section $t: S \rightarrow A$, one gets a canonical isomorphism
$$
t^*\mathrm{Log}_A \simeq \prod_{k \geq 0} \mathrm{Sym}^k \mathcal{H}.
$$
\end{lem}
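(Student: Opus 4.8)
The idea is to compare $t^{*}\mathrm{Log}_{A}$ with $e^{*}\mathrm{Log}_{A}$ via the multiplication-by-$N$ isogeny. Since $t$ is a torsion section, choose $N\geq 1$ with $[N]\circ t=e$, where $[N]\colon A\to A$ is multiplication by $N$. As $\pi\circ[N]=\pi$, applying $[N]^{*}$ to the extension defining $\mathrm{Log}^{(1)}_{A}$ produces an extension
$$
0\to\pi^{*}\mathcal{H}\to[N]^{*}\mathrm{Log}^{(1)}_{A}\to\Q\to0 ,
$$
and I would first pin down its class in $\mathrm{Ext}^{1}_{A}(\Q,\pi^{*}\mathcal{H})\simeq\mathrm{Ext}^{1}_{S}(\Q,\mathcal{H})\oplus\mathrm{Hom}_{S}(\mathcal{H},\mathcal{H})$. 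The component in $\mathrm{Ext}^{1}_{S}(\Q,\mathcal{H})$ is $e^{*}[N]^{*}[\mathrm{Log}^{(1)}_{A}]=([N]\circ e)^{*}[\mathrm{Log}^{(1)}_{A}]=e^{*}[\mathrm{Log}^{(1)}_{A}]=0$. For the component $\delta([N]^{*}[\mathrm{Log}^{(1)}_{A}])$ one uses that $\delta$ is the Leray edge morphism, so on a fibre $A_{s}$ it is restriction of extension classes into $H^{1}(A_{s},\Q)\otimes\mathcal{H}_{s}=\mathrm{End}(\mathcal{H}_{s})$; since $[N]$ restricts on $A_{s}$ to multiplication by $N$ on the abelian variety, it acts by $N$ on $H^{1}(A_{s},\Q)$ and trivially on the constant coefficients, hence by $N$ on the whole space. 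Therefore $\delta([N]^{*}[\mathrm{Log}^{(1)}_{A}])=N\cdot\mathrm{id}_{\mathcal{H}}$, i.e.\ $[N]^{*}[\mathrm{Log}^{(1)}_{A}]=N\cdot[\mathrm{Log}^{(1)}_{A}]$ in $\mathrm{Ext}^{1}_{A}(\Q,\pi^{*}\mathcal{H})$.

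Now $N\cdot[\mathrm{Log}^{(1)}_{A}]$ is also the class of the pushout of $\mathrm{Log}^{(1)}_{A}$ along the automorphism of $\pi^{*}\mathcal{H}$ given by multiplication by $N$ (recall $\mathcal{H}$ is a $\Q$-local system). Since two extensions of $\Q$ by $\pi^{*}\mathcal{H}$ with the same class are isomorphic as extensions, and the pushout of $\mathrm{Log}^{(1)}_{A}$ along an automorphism is isomorphic to $\mathrm{Log}^{(1)}_{A}$ as a sheaf by a map inducing the identity on the quotient $\Q$, I obtain an isomorphism of local systems $\varphi^{(1)}\colon[N]^{*}\mathrm{Log}^{(1)}_{A}\xrightarrow{\ \sim\ }\mathrm{Log}^{(1)}_{A}$ compatible with the augmentations to $\Q$. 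Applying $\mathrm{Sym}^{k}$ yields isomorphisms $[N]^{*}\mathrm{Log}^{(k)}_{A}\xrightarrow{\ \sim\ }\mathrm{Log}^{(k)}_{A}$, and compatibility with the augmentations ensures these commute with the transition maps of Definition~\ref{def-log}; passing to the inverse limit gives an isomorphism of prosheaves $\varphi\colon[N]^{*}\mathrm{Log}_{A}\xrightarrow{\ \sim\ }\mathrm{Log}_{A}$.

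Finally, pulling $\varphi$ back along $t$ and using $[N]\circ t=e$ gives
$$
t^{*}\mathrm{Log}_{A}\xrightarrow{\ \sim\ }t^{*}[N]^{*}\mathrm{Log}_{A}=([N]\circ t)^{*}\mathrm{Log}_{A}=e^{*}\mathrm{Log}_{A},
$$
and the splitting recorded in Definition~\ref{def-log} identifies $e^{*}\mathrm{Log}_{A}$ with $\prod_{k\geq 0}\mathrm{Sym}^{k}\mathcal{H}$, which is the assertion. That the isomorphism does not depend on the auxiliary $N$, hence is canonical, is most transparent from the universal characterisation of $\mathrm{Log}_{A}$ recalled after Definition~\ref{def-log}, which I would invoke only for this last point.

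The step I expect to be the real content is the class computation in the first paragraph, which requires making the edge morphism $\delta$ explicit enough to see that $[N]^{*}$ scales it by $N$; everything afterwards is formal, the only delicate bookkeeping being to keep $\varphi^{(1)}$ compatible with the augmentations so that it survives $\mathrm{Sym}^{k}$ and the inverse limit.
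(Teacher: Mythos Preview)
Your proposal is correct and follows essentially the same route as the paper: establish $[N]^{*}\mathrm{Log}_{A}\simeq\mathrm{Log}_{A}$ and then pull back along $t$ using $[N]\circ t=e$. The only difference is one of detail: the paper dispatches the isomorphism $[N]^{*}\mathrm{Log}_{A}\simeq\mathrm{Log}_{A}$ in one line by noting $[N]^{*}\pi^{*}\mathrm{Sym}^{k}\mathcal{H}=\pi^{*}\mathrm{Sym}^{k}\mathcal{H}$ and invoking induction, whereas you compute the extension class $[N]^{*}[\mathrm{Log}^{(1)}_{A}]=N\cdot[\mathrm{Log}^{(1)}_{A}]$ explicitly and argue via pushout along the invertible scalar $N$---your version is more fully justified, since mere equality of associated graded pieces does not by itself force an isomorphism of extensions.
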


\begin{proof}
Let $N$ be the order of $t$ and let $[N]: A \rightarrow A$ be the multiplication by $N$. For any $k$, we have $[N]^*\pi^* \mathrm{Sym}^k\mathcal{H}=\pi^* \mathrm{Sym}^k\mathcal{H}$ so, by induction $[N]^*\mathrm{Log}_{A} \simeq \mathrm{Log}_{A}$. As a consequence 
$$
t^*\mathrm{Log}_A \simeq t^*[N]^*\mathrm{Log}_A \simeq e^* \mathrm{Log}_A \simeq \prod_{k \geq 0} \mathrm{Sym}^k \mathcal{H}.
$$
\end{proof}

Let $j: U=A-e(S) \rightarrow A$ be the complement of the zero section in $A$. Denote by $\pi_U: U \rightarrow S$ the projection $\pi_U= \pi \circ j$ and by $\mathrm{Log}_U$ the restriction $j^* \mathrm{Log}_A$. For any integer $m$, we denote as usual by $\Q(m)$ the constant Tate local system $\Q(m)=(2i\pi)^m \Q$.

\begin{thm} \label{images-directes} (i) One has $R^i\pi_*\mathrm{Log}_A=0$ for $i \neq 2g$ and the augmentation map $\mathrm{Log}_A \rightarrow \Q$ induces canonical isomorphisms
$$
R^{2g}\pi_*\mathrm{Log}_A \simeq R^{2g}\pi_*\Q \simeq \Q(-g).
$$
(ii)  One has $R^i \pi_{U\,*} \mathrm{Log}_U=0$ for $i \neq 2g-1$ and 
$$
R^{2g-1} \pi_{U\,*} \mathrm{Log}_U \simeq \prod_{k>0} \mathrm{Sym}^k \mathcal{H}(-g).
$$
\end{thm}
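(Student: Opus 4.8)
The plan is to establish (i) first and to deduce (ii) from it by a localisation argument; we may assume $S$ connected, and since the fibres of $\pi$ are $2g$-dimensional one has $R^i\pi_*(-)=0$ for $i>2g$ on every sheaf, so only the range $0\le i\le 2g$ is at issue. For (i) I would work at finite level: each $\mathrm{Log}^{(k)}_A$ is an honest local system, and the exact sequences of Definition \ref{def-log} exhibit it as an iterated extension with graded pieces $\pi^*\mathrm{Sym}^j\mathcal{H}$ for $0\le j\le k$, the smallest step being the subsheaf $\pi^*\mathrm{Sym}^k\mathcal{H}$ and the quotient being $\pi^*\mathrm{Sym}^0\mathcal{H}=\Q$. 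Applying $R\pi_*$, the projection formula $R\pi_*\pi^*\mathrm{Sym}^j\mathcal{H}\simeq R\pi_*\Q\otimes\mathrm{Sym}^j\mathcal{H}$, and the canonical isomorphisms $R^i\pi_*\Q\simeq\bigwedge^i\mathcal{H}^\vee$ (with $R^{2g}\pi_*\Q\simeq\Q(-g)$) for an abelian scheme, one gets a spectral sequence converging to $R^{\bullet}\pi_*\mathrm{Log}^{(k)}_A$ whose first page is $\bigoplus_{i\ge 0,\,j\ge 0}\bigwedge^i\mathcal{H}^\vee\otimes\mathrm{Sym}^j\mathcal{H}$, the summand of exterior degree $i$ and symmetric degree $j$ lying in cohomological degree $i$.

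The crux is the first differential. Because all the extensions above are built from the single class $[\mathrm{Log}^{(1)}_A]$, which by Proposition \ref{carac-log} satisfies $\delta[\mathrm{Log}^{(1)}_A]=\mathrm{id}_{\mathcal{H}}$, the differential $d_1$ is, up to non-zero scalars on the graded pieces, cup product with the canonical element $\theta\in\mathcal{H}^\vee\otimes\mathcal{H}$ corresponding to $\mathrm{id}_{\mathcal{H}}$; concretely, in a local frame $(e_a)$ of $\mathcal{H}$ with dual coframe $(e_a^\vee)$ it is the Koszul map $x\mapsto\theta\wedge x=\sum_a(e_a^\vee\wedge\,\cdot\,)\otimes(e_a\,\cdot\,)$, raising both the exterior and the symmetric degree by one. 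This is most transparent after homogenising $\mathrm{Log}^{(1)}_A\simeq\pi^*\mathcal{H}\oplus\Q\!\cdot\! t$ and describing $\mathrm{Log}^{(k)}_A$ as the degree-$k$ part of $\Q[t]\otimes\mathrm{Sym}^{\bullet}\mathcal{H}$ with monodromy $t\mapsto t+(\,\cdot\,)$; alternatively one checks it on a fibre, where $\mathrm{Log}^{(k)}_A$ is the $\pi_1$-representation $\Q[\pi_1]/J^{\,k+1}$, $J$ the augmentation ideal.

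Now $\theta$ is the image of the regular sequence given by a frame of $\mathcal{H}$ inside the polynomial algebra $\mathrm{Sym}^{\bullet}\mathcal{H}$, so $(\bigwedge^{\bullet}\mathcal{H}^\vee\otimes\mathrm{Sym}^{\bullet}\mathcal{H},\ \theta\wedge)$ is a Koszul cochain complex: it is exact except in top exterior degree $2g$, where its cohomology is $\bigwedge^{2g}\mathcal{H}^\vee\simeq\Q(-g)$, concentrated in symmetric degree $0$. Hence in the limit the spectral sequence has a single surviving term, $R^i\pi_*\mathrm{Log}_A$ vanishes for $i\ne 2g$, and the augmentation $\mathrm{Log}_A\to\Q$ induces $R^{2g}\pi_*\mathrm{Log}_A\simeq R^{2g}\pi_*\Q\simeq\Q(-g)$. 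At finite level $k$ the Koszul complex is truncated to symmetric degrees $\le k$, which creates additional cohomology in degrees $<2g$; but this extra part is a subquotient of $R^{\bullet}\pi_*(\pi^*\mathrm{Sym}^k\mathcal{H})$, i.e.\ it originates from the subsheaf $\pi^*\mathrm{Sym}^k\mathcal{H}\subset\mathrm{Log}^{(k)}_A$, hence dies under the transition $\mathrm{Log}^{(k)}_A\to\mathrm{Log}^{(k-1)}_A$. Therefore the pro-systems $\{R^i\pi_*\mathrm{Log}^{(k)}_A\}_k$ are pro-zero for $i\ne 2g$ and essentially constant $\simeq\Q(-g)$ for $i=2g$ (so both the inverse limit and its first derived functor are as wanted), which proves (i).

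For (ii), apply $R\pi_*$ to the localisation triangle attached to $e(S)\hookrightarrow A\hookleftarrow U$,
$$e_*e^!\mathrm{Log}_A\ra\mathrm{Log}_A\ra Rj_*\mathrm{Log}_U\xra{+1},$$
and use $\pi\circ e=\mathrm{id}_S$ (so $R\pi_*e_*=\mathrm{id}$), purity $e^!\mathrm{Log}_A\simeq e^*\mathrm{Log}_A(-g)[-2g]\simeq\prod_{k\ge 0}\mathrm{Sym}^k\mathcal{H}(-g)[-2g]$ (the splitting of Definition \ref{def-log}), and part (i). One obtains a triangle
$$\textstyle\prod_{k\ge 0}\mathrm{Sym}^k\mathcal{H}(-g)[-2g]\xra{\phi}\Q(-g)[-2g]\ra R\pi_{U\,*}\mathrm{Log}_U\xra{+1},$$
and it remains to identify $\phi$ with the projection onto the factor $k=0$. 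This follows by naturality of the construction in the augmentation $\mathrm{Log}_A\to\Q$: for $\mathcal{F}=\Q$ the analogous map $e^!\Q\to R\pi_*\Q$ is the canonical identification $\Q(-g)[-2g]\xrightarrow{\sim}R^{2g}\pi_*\Q[-2g]$ (a section of $\pi$ is Poincaré dual to the fibrewise class of a point), while $e^*(\mathrm{Log}_A\to\Q)$ is, under the splitting of Definition \ref{def-log}, the projection onto $\mathrm{Sym}^0\mathcal{H}=\Q$. Thus $\phi$ is surjective in degree $2g$ with kernel $\prod_{k>0}\mathrm{Sym}^k\mathcal{H}(-g)$, and the long exact sequence of the triangle gives $R^i\pi_{U\,*}\mathrm{Log}_U=0$ for $i\ne 2g-1$ and $R^{2g-1}\pi_{U\,*}\mathrm{Log}_U\simeq\prod_{k>0}\mathrm{Sym}^k\mathcal{H}(-g)$. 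The main obstacle is the argument of the middle two paragraphs — pinning down the $d_1$-differential as the Koszul differential and controlling the truncation contributions in the pro-system; once that and the exactness of the Koszul complex are in hand, (i) follows and the passage (i)$\Rightarrow$(ii) is purely formal.
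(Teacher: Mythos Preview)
Your argument is correct and is essentially the standard proof: the Koszul computation for (i) is the content of \cite{wildeshaus} I.~Cor.~4.4, and the localisation triangle combined with purity and the splitting of $e^*\mathrm{Log}_A$ for (ii) is exactly \cite{kings1} Prop.~1.1.3. The paper itself does not give an argument at all---it simply cites those two references---so you have in effect unpacked the proofs behind those citations. One minor comment: in (i) you should say a word about why the derived limit issue is harmless (you allude to this with ``both the inverse limit and its first derived functor are as wanted''); in the references this is handled by observing that the pro-system is Mittag--Leffler in each degree, which your description of the transition maps killing the truncation tail already gives.
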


\begin{proof} The first statement follows from \cite{wildeshaus} I. Cor. 4.4 p. 70. The second follows from \cite{kings1} Prop. 1.1.3.
\end{proof}

\begin{cor}\label{corollaire1} (\cite{kings1} Cor. 1.1.4)  The edge morphism in the Leray spectral sequence for $R\pi_{U\,*}$ induces a canonical isomorphism
$$
\mathrm{Ext}^{2g-1}_U(\pi_U^*\mathcal{H}, \mathrm{Log}_U(g)) \simeq \mathrm{Hom}_S(\mathcal{H}, \prod_{k>0} \mathrm{Sym}^k \mathcal{H})
$$
where the $\mrm{Ext}^{2g-1}_U$ denotes extensions in the abelian category $\mrm{Sh}(U)$.
\end{cor}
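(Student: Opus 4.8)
The plan is to extract the claimed isomorphism from the Leray spectral sequence
$$
E_2^{p,q} = \mathrm{Ext}^p_S\bigl(\mathcal{H}, R^q\pi_{U\,*}\mathrm{Log}_U(g)\bigr) \Longrightarrow \mathrm{Ext}^{p+q}_U\bigl(\pi_U^*\mathcal{H}, \mathrm{Log}_U(g)\bigr),
$$
which is the adjunction spectral sequence coming from $\mathrm{RHom}_U(\pi_U^*\mathcal{H}, -) \simeq \mathrm{RHom}_S(\mathcal{H}, R\pi_{U\,*}(-))$ (projection formula plus the fact that $\mathcal{H}$ is a local system, hence flat, so $L\pi_U^*\mathcal{H} = \pi_U^*\mathcal{H}$). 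The point of Theorem \ref{images-directes}(ii) is that $R^q\pi_{U\,*}\mathrm{Log}_U$ is concentrated in the single degree $q = 2g-1$, so the spectral sequence degenerates: the only nonzero column is $q = 2g-1$, and there can be no differentials in or out of it. Hence for every $n$ the edge map gives $\mathrm{Ext}^n_U(\pi_U^*\mathcal{H}, \mathrm{Log}_U(g)) \simeq \mathrm{Ext}^{n-(2g-1)}_S(\mathcal{H}, R^{2g-1}\pi_{U\,*}\mathrm{Log}_U(g))$, and taking $n = 2g-1$ yields
$$
\mathrm{Ext}^{2g-1}_U(\pi_U^*\mathcal{H}, \mathrm{Log}_U(g)) \simeq \mathrm{Hom}_S\bigl(\mathcal{H}, R^{2g-1}\pi_{U\,*}\mathrm{Log}_U(g)\bigr).
$$

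First I would record the projection-formula adjunction $R\pi_{U\,*}\mathrm{RHom}_U(\pi_U^*\mathcal{H}, \mathcal{G}) \simeq \mathrm{RHom}_S(\mathcal{H}, R\pi_{U\,*}\mathcal{G})$ in $D^b_c(S)$, valid because $\mathcal{H}$ is a local system of finite rank (so it is dualizable and $\pi_U^*$ commutes with the internal Hom against it). Applying $\mathrm{RHom}_S(\mathbb{Q}(0), -)$ and taking cohomology gives the Leray/Grothendieck spectral sequence above. Then I would invoke Theorem \ref{images-directes}(ii) to see that the $E_2$-page has a single nonzero row, deduce degeneration at $E_2$, and read off the edge isomorphism in total degree $2g-1$. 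Finally I would substitute the explicit value $R^{2g-1}\pi_{U\,*}\mathrm{Log}_U(g) \simeq \prod_{k>0}\mathrm{Sym}^k\mathcal{H}(-g)(g) = \prod_{k>0}\mathrm{Sym}^k\mathcal{H}$ from the same theorem, which produces exactly the right-hand side of the corollary.

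There is essentially no obstacle here: this is a formal consequence of Theorem \ref{images-directes}(ii), which does all the work. The only point requiring a line of care is justifying that one may use the internal-Hom/projection-formula form of the adjunction — i.e. that $\pi_U^*\mathcal{H}$ is ``relatively perfect'' enough for $R\pi_{U\,*}$ to interact with $\mathrm{RHom}$ as stated — but since $\mathcal{H}$ is just a local system of $\mathbb{Q}$-vector spaces this is standard in the six-functor formalism on $D^b_c$ (cf. \cite{kashiwara-schapira}). This is exactly the argument of \cite{kings1} Cor. 1.1.4, and I would simply cite it.
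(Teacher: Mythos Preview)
Your argument is correct and is exactly the intended one: the paper does not spell out a proof but simply cites \cite{kings1} Cor.~1.1.4, whose content is precisely the Leray spectral sequence argument you wrote down, with degeneration forced by Thm.~\ref{images-directes}(ii). There is nothing to add.
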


\subsection{The polylogarithm prosheaf and its torsion sections}

\begin{defn}  The polylogarithm $\mathrm{Pol}_A$ is the extension class
$$
\mathrm{Pol}_A \in \mathrm{Ext}^{2g-1}_U(\pi_U^*\mathcal{H}, \mathrm{Log}_U(g))
$$
which corresponds to the map in $\prod_{k>0} \mathrm{Hom}_S(\mathcal{H}, \mathrm{Sym}^k \mathcal{H})$ which is the identity for $k=1$ and which is zero for $k>1$, under the isomorphism of Cor. \ref{corollaire1}.
\end{defn}

To define the Eisenstein classes, we would like to describe, for any integer $k \geq 0$ and  for any non-zero torsion section $t: S \rightarrow A$, a natural map
$$
\mathrm{Ext}^{2g-1}_U(\pi_U^*\mathcal{H}, \mathrm{Log}_U(g)) \rightarrow H^{2g-1}(S, \mathrm{Sym}^k \mathcal{H}(g)).
$$
Let $k \geq 0$ be an integer and let $t: S \rightarrow A$ be a non-zero torsion section. According to Lem. \ref{splitting}, the pull-back by $t$ induces a map
$$
t^*: \mathrm{Ext}^{2g-1}_U(\pi_U^*\mathcal{H}, \mathrm{Log}_U(g)) \rightarrow \mathrm{Ext}^{2g-1}_S(\mathcal{H}, \prod_{k \geq 0} \mathrm{Sym}^k \mathcal{H}(g)).
$$
The right hand term maps naturally to $H^{2g-1}(S, \mathrm{Sym}^k \mathcal{H}(g))$ by the composition of the canonical isomorphism 
$$
\mathrm{Ext}^{2g-1}_S(\mathcal{H}, \prod_{k \geq 0} \mathrm{Sym}^k \mathcal{H}(g)) \simeq H^{2g-1}(S, \prod_{k \geq 0} \mathrm{Sym}^k \mathcal{H}(g) \otimes \mathcal{H}^\vee)
$$ 
where $\mathcal{H}^\vee$ is the dual local system, of the map induced by the $(k+1)$-th natural projection
$$
H^{2g-1}(S, \prod_{k \geq 0} \mathrm{Sym}^k \mathcal{H}(g) \otimes \mathcal{H}^\vee) \rightarrow  H^{2g-1}(S, \mathrm{Sym}^{k+1} \mathcal{H}(g) \otimes \mathcal{H}^\vee)
$$
and of the map induced by the contraction $\mathrm{Sym}^{k+1} \mathcal{H}(g) \otimes \mathcal{H}^\vee \rightarrow \mathrm{Sym}^{k} \mathcal{H}(g)$.

\begin{defn} \label{eis-ab} For any torsion section $t: S \rightarrow A$, the $k$-th Eisenstein class
$$
\mathrm{Eis}^k(t) \in H^{2g-1}(S, \mathrm{Sym}^k \mathcal{H}(g))
$$
associated to $t$ is the image of $\mathrm{Pol}_A$ under the map described above.
\end{defn}

\subsection{Base change} Let us consider the compatibility of the polylogarithm prosheaf and of the Eisenstein classes with base change. Let $f: S' \rightarrow S$ be a morphism and let $\pi': A' \rightarrow S'$ be the pull-back of $\pi: A \rightarrow S$ by $f$. We have a cartesian square
$$
\begin{CD}
A' @>f' >> A \\
@V\pi'VV         @V\pi VV \\
S' @>f>> S.
\end{CD}
$$
Let $\mathcal{H}'=\underline{\mathrm{Hom}}_{S'}(R^1\pi'_*\Q, \Q).$ By Poincar\'e duality, we have $$\mathcal{H}=\underline{\mathrm{Hom}}_S(R^1\pi_*\Q, \Q)=R^1\pi_!\Q=R^1\pi_*\Q$$
and similarly for $\mathcal{H}'.$ Hence, the proper base change theorem implies $f^*\mathcal{H} \simeq \mathcal{H}'$.

\begin{pro} \label{chgt-de-base-log} With the notations above, we have a canonical isomorphism
$$
f'^*\mathrm{Log}_A \simeq \mathrm{Log}_{A'}.
$$
\end{pro}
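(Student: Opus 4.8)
The plan is to reduce everything to the characterisation of $\mathrm{Log}^{(1)}$ given in Proposition \ref{carac-log} and then pass to symmetric powers and to the inverse limit. Since $\pi \circ f' = f \circ \pi'$ one has $f'^*\pi^*\mathcal{H} = (\pi f')^*\mathcal{H} = (f\pi')^*\mathcal{H} = \pi'^* f^*\mathcal{H} \simeq \pi'^*\mathcal{H}'$, the last isomorphism being the one recalled just before the statement. As pull-back along $f'$ is exact on $\mrm{Sh}(A)$, applying $f'^*$ to the defining extension of $\mathrm{Log}^{(1)}_A$ yields an exact sequence
$$
0 \rightarrow \pi'^*\mathcal{H}' \rightarrow f'^*\mathrm{Log}^{(1)}_A \rightarrow \Q \rightarrow 0,
$$
that is, a class $f'^*[\mathrm{Log}^{(1)}_A] \in \mathrm{Ext}^1_{A'}(\Q, \pi'^*\mathcal{H}')$. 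It then suffices to check that this class satisfies the two normalisations characterising $[\mathrm{Log}^{(1)}_{A'}]$; uniqueness in Proposition \ref{carac-log} will then give $f'^*\mathrm{Log}^{(1)}_A \simeq \mathrm{Log}^{(1)}_{A'}$ in $\mrm{Sh}(A')$.

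For the vanishing along the zero section, the zero section is compatible with base change, so $f' \circ e' = e \circ f$, whence $e'^* f'^*[\mathrm{Log}^{(1)}_A] = f^* e^*[\mathrm{Log}^{(1)}_A] = f^* 0 = 0$. For the second normalisation I would use that the Leray spectral sequence for $\mrm{RHom}_{\mrm{Sh}(-)}(\Q,-)\circ \pi_*$ (resp. $\pi'_*$), together with proper base change $f^* R\pi_* \simeq R\pi'_* f'^*$ (valid as $\pi$ is proper, being an abelian scheme), is functorial for the cartesian square; this gives a morphism of spectral sequences compatible with the splittings induced by $e^*$, $e'^*$ and with the projection-formula identifications, hence a commutative square
$$
\begin{CD}
\mathrm{Ext}^1_A(\Q,\pi^*\mathcal{H}) @>\delta>> \mathrm{Hom}_S(\mathcal{H},\mathcal{H}) \\
@Vf'^*VV @Vf^*VV \\
\mathrm{Ext}^1_{A'}(\Q,\pi'^*\mathcal{H}') @>\delta'>> \mathrm{Hom}_{S'}(\mathcal{H}',\mathcal{H}').
\end{CD}
$$
Since $f^*$ carries $\mathrm{id}_{\mathcal{H}}$ to $\mathrm{id}_{\mathcal{H}'}$ under $f^*\mathcal{H}\simeq \mathcal{H}'$, we get $\delta'(f'^*[\mathrm{Log}^{(1)}_A]) = f^*\delta[\mathrm{Log}^{(1)}_A] = f^*\mathrm{id}_{\mathcal{H}} = \mathrm{id}_{\mathcal{H}'}$, as needed.

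Finally, $f'^*$ is an exact symmetric monoidal functor, so it commutes with the formation of $\mathrm{Sym}^k$: we obtain $f'^*\mathrm{Log}^{(k)}_A \simeq \mathrm{Log}^{(k)}_{A'}$ compatibly with the transition maps induced by $\mathrm{Log}^{(1)}\to\Q$, and passing to the inverse limit of the pro-system gives $f'^*\mathrm{Log}_A \simeq \mathrm{Log}_{A'}$. I expect the only genuine point requiring care to be the base-change compatibility of $\delta$ — i.e. producing the morphism of Leray spectral sequences and checking it is compatible with the $e^*$-splitting and the projection formula $R^1\pi_*\pi^*\mathcal{H}\simeq R^1\pi_*\Q\otimes\mathcal{H}$ — while exactness of $f'^*$, compatibility of zero sections and of $\mathrm{Sym}^k$ with base change, and commuting $f'^*$ past the countable inverse limit of pro-sheaves are all formal.
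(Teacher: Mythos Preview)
Your proof is correct and follows exactly the same approach as the paper: verify that $f'^*[\mathrm{Log}^{(1)}_A]$ satisfies the two characterising properties of Proposition~\ref{carac-log}, conclude $f'^*\mathrm{Log}^{(1)}_A \simeq \mathrm{Log}^{(1)}_{A'}$, and then pass to symmetric powers and the pro-limit. The paper's own proof is a one-line version of what you wrote; your additional care about the base-change compatibility of $\delta$ via a morphism of Leray spectral sequences is precisely the content hidden behind the paper's phrase ``satisfies the properties characterising $\mathrm{Log}_{A'}^{(1)}$''.
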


\begin{proof} The image of $\mathrm{Log}_A^{(1)}$ under the natural map
$$
f'^*: \mathrm{Ext}^{1}_A(\Q, \pi^*\mathcal{H}) \rightarrow \mathrm{Ext}^1_{A'}(\Q, \pi'^*\mathcal{H}').
$$
satisfies the properties characterising $\mathrm{Log}_{A'}^{(1)}$ (see Prop. \ref{carac-log}), hence $f'^*\mathrm{Log}_A^{(1)} \simeq \mathrm{Log}_{A'}^{(1)}$. This implies the statement.
\end{proof}

By the previous proposition, we have a map
$$
f'^*: \mathrm{Ext}^{2g-1}_U(\pi_{U}^*\mathcal{H}, \mathrm{Log}_U(g)) \rightarrow \mathrm{Ext}^{2g-1}_{U'}(\pi_{U'}^*\mathcal{H}', \mathrm{Log}_{U'}(g)).
$$

\begin{pro} \label{chgt-de-base-polylog} We have $$f'^*\mathrm{Pol}_A=\mathrm{Pol}_{A'}.$$
\end{pro}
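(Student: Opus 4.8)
The plan is to reduce the statement to the characterization of $\mathrm{Pol}_{A'}$ via the isomorphism of Corollary \ref{corollaire1}, exactly as $\mathrm{Pol}_A$ is defined. First I would record that the base-change maps are compatible with all the structures entering the construction: by Proposition \ref{chgt-de-base-log} we have $f'^*\mathrm{Log}_A \simeq \mathrm{Log}_{A'}$ compatibly with the augmentation maps $\mathrm{Log}_A \to \Q$ and $\mathrm{Log}_{A'} \to \Q$ (this follows from the corresponding compatibility at level $k=1$, which is built into the proof of Proposition \ref{chgt-de-base-log}), and restricting to the open complements $U = A - e(S)$, $U' = A' - e'(S')$ — which themselves form a cartesian square since $f'$ pulls back $e(S)$ to $e'(S')$ — we get $f'^*\mathrm{Log}_U \simeq \mathrm{Log}_{U'}$. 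Also $f^*\mathcal{H} \simeq \mathcal{H}'$ by proper base change, as noted before Proposition \ref{chgt-de-base-log}, and this is compatible with the multiplication maps $\mathrm{Sym}^k\mathcal{H} \to \mathrm{Sym}^k\mathcal{H}$ on both sides and with the contractions.

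Next I would check that the isomorphisms of Theorem \ref{images-directes}(ii) and Corollary \ref{corollaire1} are compatible with $f'^*$. This is the technical heart of the argument. The point is that $\pi_U$ and $\pi_{U'}$ are proper (Poincar\'e duality / properness of abelian schemes, as used already for $\mathcal{H}$), so proper base change gives $f^* R^{2g-1}\pi_{U\,*}\mathrm{Log}_U \simeq R^{2g-1}\pi_{U'\,*}f'^*\mathrm{Log}_U \simeq R^{2g-1}\pi_{U'\,*}\mathrm{Log}_{U'}$, and one must verify that under this identification the canonical isomorphism $R^{2g-1}\pi_{U\,*}\mathrm{Log}_U \simeq \prod_{k>0}\mathrm{Sym}^k\mathcal{H}(-g)$ pulls back to the corresponding one for $U'$. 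Since that isomorphism in \cite{kings1} Prop.~1.1.3 is itself built out of the augmentation map, the exact sequences of Definition \ref{def-log}, and the computation of $R\pi_*\Q$ — all of which are base-change compatible — this holds. The same reasoning applies to the edge morphism of the Leray spectral sequence for $R\pi_{U\,*}$, giving a commutative square relating the two isomorphisms of Corollary \ref{corollaire1} via $f'^*$ on the left and $f^*$ on the right.

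Finally I would conclude by naturality. The class $\mathrm{Pol}_A$ corresponds, under the isomorphism of Corollary \ref{corollaire1}, to the element of $\prod_{k>0}\mathrm{Hom}_S(\mathcal{H},\mathrm{Sym}^k\mathcal{H})$ that is $\mathrm{id}_\mathcal{H}$ for $k=1$ and $0$ otherwise. Applying $f^*$ to the right-hand side sends $\mathrm{id}_\mathcal{H}$ to $\mathrm{id}_{\mathcal{H}'}$ (since $f^*\mathcal{H}\simeq\mathcal{H}'$) and $0$ to $0$, so it sends the distinguished element for $A$ to the distinguished element for $A'$; by the commutative square from the previous step, $f'^*\mathrm{Pol}_A$ corresponds under Corollary \ref{corollaire1} for $A'$ to precisely this distinguished element, which by definition is $\mathrm{Pol}_{A'}$. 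Hence $f'^*\mathrm{Pol}_A = \mathrm{Pol}_{A'}$.

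The main obstacle I anticipate is the bookkeeping in the middle step: making sure the isomorphisms imported from \cite{wildeshaus} and \cite{kings1} are stated (or can be reproved) with enough functoriality in the abelian scheme that the relevant squares commute on the nose, rather than merely up to an unspecified automorphism of $\prod_{k>0}\mathrm{Sym}^k\mathcal{H}$. In practice this is a formal consequence of the fact that those isomorphisms are constructed from the augmentation, the filtration of Definition \ref{def-log}, and proper base change, but it is the one place where some care is genuinely required.
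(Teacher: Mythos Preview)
Your approach is exactly the paper's: assert the commutative square linking the two isomorphisms of Corollary~\ref{corollaire1} via $f'^*$ and $f^*$, then observe that $f^*$ carries $(\mathrm{id}_{\mathcal{H}},0,0,\ldots)$ to $(\mathrm{id}_{\mathcal{H}'},0,0,\ldots)$. You have in fact supplied more justification for the commutativity than the paper does.

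One correction: $\pi_U:U\to S$ is \emph{not} proper, since $U=A-e(S)$ is the open complement of the zero section; your parenthetical ``properness of abelian schemes'' applies to $\pi$, not $\pi_U$. Base change for $R\pi_{U\,*}$ still holds, but the reason is either (a) that in the topological setting the fibration $U\to S$ is locally trivial with fibre a punctured torus, so higher direct images of local systems commute with pull-back, or (b) that one deduces it from proper base change for $\pi$ and for $e$ via the localisation triangle $e_! e^! \to \mathrm{id} \to Rj_* j^*$. Either fix is routine, so this does not affect the validity of your argument.
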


\begin{proof} Denote by $j': U'=A'-e'(S') \rightarrow A'$ the open embedding complementary to the zero section $e': S' \rightarrow A'$ and let $\mathrm{Log}_{U'}=j'^* \mathrm{Log}_{A'}$. The statement follows from the fact that we have a commutative diagram
$$
\begin{CD}
\mathrm{Ext}^{2g-1}_U(\pi_{U}^*\mathcal{H}, \mathrm{Log}_U(g)) @>\sim>> \prod_{k>0} \mathrm{Hom}_S(\mathcal{H}, \mathrm{Sym}^k \mathcal{H})\\
@Vf'^*VV                                                                                                                    @Vf^*VV\\
\mathrm{Ext}^{2g-1}_{U'}(\pi_{U'}^*\mathcal{H}', \mathrm{Log}_{U'}(g)) @>\sim>>\prod_{k>0} \mathrm{Hom}_{S'}(\mathcal{H}', \mathrm{Sym}^k \mathcal{H}').
\end{CD}
$$
where the right hand vertical map sends the map in $\prod_{k>0} \mathrm{Hom}_S(\mathcal{H}, \mathrm{Sym}^k \mathcal{H})$ which is the identity for $k=1$ and which is zero for $k>1$ to the analogous map in the lower right hand corner of the diagram.
\end{proof}

\begin{cor} \label{chgt-de-base-eisenstein}Let $t: S \rightarrow A$ be a non-zero torsion section and let $t': S' \rightarrow A'$ be the pull-back of $t$ by $f'$. Then
$$
f^*\mathrm{Eis}^k(t)=\mathrm{Eis}^k(t').
$$
\end{cor}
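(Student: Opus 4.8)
The plan is to trace the definition of the Eisenstein class through the base change compatibilities already established, verifying that each of the elementary maps used to build $\mathrm{Eis}^k$ from $\mathrm{Pol}_A$ commutes with $f^*$ (or $f'^*$, or $t^*$ versus $t'^*$). The statement is ultimately a diagram chase, so the proof should be short once the relevant squares are assembled.

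First I would observe that, by Proposition \ref{chgt-de-base-polylog}, $f'^*\mathrm{Pol}_A = \mathrm{Pol}_{A'}$ in $\mathrm{Ext}^{2g-1}_{U'}(\pi_{U'}^*\mathcal{H}', \mathrm{Log}_{U'}(g))$. Next I would check that the pull-back along the torsion section is compatible with base change, i.e. that the square
$$
\begin{CD}
\mathrm{Ext}^{2g-1}_U(\pi_U^*\mathcal{H}, \mathrm{Log}_U(g)) @>t^*>> \mathrm{Ext}^{2g-1}_S(\mathcal{H}, \prod_{k\geq 0}\mathrm{Sym}^k\mathcal{H}(g))\\
@Vf'^*VV @Vf^*VV\\
\mathrm{Ext}^{2g-1}_{U'}(\pi_{U'}^*\mathcal{H}', \mathrm{Log}_{U'}(g)) @>t'^*>> \mathrm{Ext}^{2g-1}_{S'}(\mathcal{H}', \prod_{k\geq 0}\mathrm{Sym}^k\mathcal{H}'(g))
\end{CD}
$$
commutes. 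Here $t' = f'{}^{-1}(t)$ in the cartesian square, so that $f \circ t' = t \circ f'$ after identifying the relevant spaces; the commutativity then follows from functoriality of pull-back together with the fact that the canonical splitting of Lemma \ref{splitting} is itself compatible with base change (it is built from $e^*$, $[N]^*$ and $\pi^*\mathrm{Sym}^k\mathcal{H}$, each of which base-changes correctly by Proposition \ref{chgt-de-base-log} and proper base change $f^*\mathcal{H}\simeq\mathcal{H}'$). Finally I would note that the three remaining maps defining $\mathrm{Eis}^k$ out of $\mathrm{Ext}^{2g-1}_S(\mathcal{H}, \prod_{k\geq 0}\mathrm{Sym}^k\mathcal{H}(g))$ — the canonical identification with $H^{2g-1}(S, \prod \mathrm{Sym}^k\mathcal{H}(g)\otimes\mathcal{H}^\vee)$, the projection onto the $(k+1)$-st factor, and the contraction $\mathrm{Sym}^{k+1}\mathcal{H}(g)\otimes\mathcal{H}^\vee \to \mathrm{Sym}^k\mathcal{H}(g)$ — are all induced by morphisms of (pro-)local systems and by functorial identifications, hence commute with $f^*$ on cohomology. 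Chaining these squares sends $\mathrm{Pol}_A \mapsto \mathrm{Eis}^k(t)$ along the top and then down to $f^*\mathrm{Eis}^k(t)$, while going down first and then along the bottom sends $\mathrm{Pol}_A \mapsto \mathrm{Pol}_{A'} \mapsto \mathrm{Eis}^k(t')$; equality of the two composites gives the claim.

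The only genuinely delicate point is the middle square, namely that $t^*$ and $t'^*$ are intertwined by $f^*$ and $f'^*$ in a way compatible with the \emph{choice of trivialization} in Lemma \ref{splitting}. One must check that the isomorphism $t^*\mathrm{Log}_A \simeq \prod_k \mathrm{Sym}^k\mathcal{H}$ constructed there pulls back under $f$ to the analogous isomorphism for $A'$ — this is where the proof of Proposition \ref{chgt-de-base-log} (that $f'^*\mathrm{Log}_A^{(1)}$ satisfies the characterizing properties of $\mathrm{Log}_{A'}^{(1)}$) is used a second time, now together with the compatibility of $[N]^*$ and $e^*$ with base change. Once that is in hand, everything else is formal, and I would present the argument as a single commutative diagram with a sentence justifying each face.
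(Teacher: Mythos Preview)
Your proposal is correct and follows essentially the same approach as the paper: the paper's proof consists of exactly the commutative diagram you describe (with the roles of horizontal and vertical arrows swapped), chasing $\mathrm{Pol}_A$ through $t^*$ and the projection/contraction maps and invoking Prop.~\ref{chgt-de-base-polylog} implicitly. Your write-up is in fact more detailed than the paper's, which simply asserts the commutativity of the two stacked squares without further justification.
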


\begin{proof} The statement follows from the commutativity of the following diagram:
$$
\begin{CD}
\mathrm{Ext}^{2g-1}_U(\pi_{U}^*\mathcal{H}, \mathrm{Log}_U(g))  @>f'^*>> \mathrm{Ext}^{2g-1}_{U'}(\pi_{U'}^*\mathcal{H}', \mathrm{Log}_{U'}(g)) \\
@Vt^*VV                                                                                                                             @Vt'^*VV\\
\mathrm{Ext}^{2g-1}_S(\mathcal{H}, \prod_{k \geq 0} \mathrm{Sym}^k \mathcal{H}(g)) @>f^*>> \mathrm{Ext}^{2g-1}_{S'}(\mathcal{H}', \prod_{k \geq 0} \mathrm{Sym}^k \mathcal{H}'(g))\\
@VVV                                                                                                                                    @VVV\\
H^{2g-1}(S, \mathrm{Sym}^k \mathcal{H}(g)) @>f^*>> H^{2g-1}(S', \mathrm{Sym}^k \mathcal{H}'(g)).
\end{CD}
$$
\end{proof}

\section{Geometry}

\subsection{Shimura data and Shimura varieties} We will need some results of Pink's thesis \cite{pink} on the functoriality of the Baily-Borel compactification of Shimura varieties. Pink works in the setting of Shimura data that we shall recall now. Let $\mbb{S}=\mrm{Res}_{\C/\R} \mbb{G}_{m \C}$ be the Deligne torus. Let $P$ be a connected linear algebraic group over $\Q$, let $W$ be its unipotent radical and let $U$ be a subgroup of $W$. A mixed Shimura datum with underlying group $P$ is a triple $(P, \mathfrak{X}, h)$ where $\mathfrak{X}$ is a left homogeneous space under the subgroup $P(\R) U(\C) \subset P(\C)$ and where $h: \mathfrak{X} \rightarrow \mrm{Hom}(\mbb{S}_{\C}, P_{\C})$ is a $P(\R) U(\C)$-equivariant map such that the properties (i)-(viii) of \cite{pink} Def. 2.1 are satisfied. The Shimura datum is said pure if the group $P$ is reductive.

\begin{defn} \label{morphism-Shimura} A morphism $(P_1, \mathfrak{X}_1, h_1) \rightarrow (P_2, \mathfrak{X}_2, h_2)$ of mixed Shimura data is a pair $(\phi, \psi)$ where $\phi: P_1 \rightarrow P_2$ is a morphism of algebraic groups and $\psi: \mathfrak{X}_1 \rightarrow \mathfrak{X}_2$ is a $P_1(\R) U_1(\C)$-equivariant map such that the following diagram commutes
$$
\begin{CD}
\mathfrak{X}_1 @>\psi >>                                                                                     \mathfrak{X}_2\\
@VVV                                                                                                                                       @VVV\\
\mrm{Hom}(\mbb{S}_{\C}, P_{1 \C})      @>h \mapsto \phi \circ h >>                                  \mrm{Hom}(\mbb{S}_{\C}, P_{2 \C}).\\
\end{CD}
$$
\end{defn}

\subsection{Siegel varieties} \label{siegel-var}Let $g \geq 1$ and $n \geq 3$ be two integers. We consider the functor which associates to a scheme $T$ over $\Spec \Q$ the set of isomorphism classes of triples $(A, \lambda, \eta)$ where $A/T$ is an abelian scheme of relative dimension $g$, $\lambda$ is a principal polarization of $A/T$, and $\eta: A[n] \simeq (\Z/n\Z)_S^{2g}$ is a principal level $n$ structure compatible with $\lambda$ (see \cite{laumon} Partie I.3, which can be adapted from $g=2$ to arbitrary $g$, for details). This functor is represented by a smooth scheme $\mathcal{S}$ of finite type over $\Spec \Q$. The variety $\mathcal{S}$ is the Siegel variety of genus $g$ and level $n$. Let $\mathcal{A}/\mathcal{S}$ be the universal abelian scheme. We will need the description of $\mathcal{S}$ as a Shimura variety. So, let $I_g$ be the identity matrix of size $g$ and let $\psi$ be the symplectic form on $\Z^{2g}$ whose matrix in the canonical basis is
\begin{equation} \label{symplectic}
\psi=
\begin{pmatrix}
 & I_g\\
-I_g & \\
\end{pmatrix}.
\end{equation}
The symplectic group $G=\mathrm{GSp}(2g)$ is the algebraic group defined as
$$
G=\{h \in \mathrm{GL}(2g)_{/\Z} \,|\, ^th \psi h =  \psi, \, \nu(h) \in \mathbb{G}_{m/\Z}\}.
$$
Its derived group is $G^1=\mrm{Ker} \nu$. Let
$$
\mathfrak{H}_{g}^\pm=\{\tau \in \mathrm{M}_g(\C)\,|\,^t\tau=\tau, \pm \Im(\tau)\,\text{positive definite}\}
$$
be the disjoint union of Siegel upper and lower half planes. It follows for example from \cite{birkenhake-lange} Prop. 8.2.3 a) that the group $G(\R)$ acts transitively on $\mathfrak{H}_{g}^\pm$ by the formula
$$
\begin{pmatrix}
A & B\\
C & D\\
\end{pmatrix}.\tau=(A\tau+B)(C\tau+D)^{-1}.
$$
Now, let $h: \mathbb{S} \rightarrow G_{\R}$ be the morphism which induces on real points
\begin{equation} \label{morphism-h}
z=x+iy \mapsto \begin{pmatrix}
x^\star & y^\star\\
-y^\star & x^\star\\
\end{pmatrix}
\end{equation}
where, for $r \in \R$, we denote by $r^\star \in \mrm{M}(g, \R)$ the diagonal matrix whose all entries are equal to $r$. Then, the $G(\R)$-conjugacy class of $h$ is in bijection with $\mathfrak{H}_g^\pm$ by the unique $G(\R)$-equivariant map sending $h$ to $iI_2$ and the pair $(G_{\Q}, \mathfrak{H}_g^\pm)$ is a pure Shimura datum (see \cite{laumon} Lem. 2.1, which can be adapted from the case $g=2$ to the general case). Let $K(n)=\mrm{Ker}(G(\widehat{\Z}) \rightarrow G(\Z/n\Z))$ be the principal congruence subgroup of $G(\mathbb{A}_f).$ Define the arithmetic subgroup $\Gamma(n)$ of $G^1(\R)$ by
$$
\Gamma(n)=\mathrm{ker}\left( G^1(\Z) \rightarrow G^1(\Z/n\Z) \right).
$$
It follows from \cite{laumon} Prop. 3.2 that, as complex analytic varieties, we have
$$
\mathcal{S}(\C)=G(\Q)\backslash (\mathfrak{H}_g^\pm \times G(\mathbb{A}_f)/K(n))=\bigsqcup_{i \in (\Z/n\Z)^\times} \Gamma(n) \backslash \mathfrak{H}_{g}^+.
$$
Let us form the semi-direct product $\Lambda(n)=\Z^{2g} \rtimes \Gamma(n)$, where $\Gamma(n)$ acts on $\Z^{2g}$ by matrix multiplication. Then, as explained in \cite{birkenhake-lange} 8.7, 8.8, the group $\Lambda(n)$ acts on $\C^g \times \mathfrak{H}_g^+$ and we have a principally polarized holomorphic family of abelian varieties
\begin{equation} \label{univ-siegel}
\pi: \Lambda(n) \backslash (\C^g \times \mathfrak{H}_g^+) \rightarrow \Gamma(n) \backslash \mathfrak{H}_g^+
\end{equation}
with level $n$ structure. We will not need that this family is the restriction of the universal family to $\Gamma(n) \backslash \mathfrak{H}_g^+$ but only that this family is algebraic. This follows from \cite{borel} Thm. 3.10.

\subsection{Hilbert-Blumenthal varieties} \label{hb-var} Let $F$ be a totally real number field of degree $g$, let $\mathcal{O}$ be its ring of integers. Denote by $\mathrm{Tr}_{F/\Q}: F \rightarrow \Q$ the trace map and let 
$$
\mathcal{D}^{-1}=\{x\in F\,|\,\forall y \in \mathcal{O}, \mathrm{Tr}_{F/\Q}(xy) \in \Z\}
$$ be the inverse different. Define a group scheme $G'$ over $\Spec \Z$ by the cartesian square
\begin{equation} \label{groupeH}
\begin{CD}
G' @>>> \mathrm{Res}_{\mathcal{O}/\Z} \mathrm{GL}(2)\\
@VVV   @VV\mathrm{det}V\\
\mathbb{G}_m @>>> \mathrm{Res}_{\mathcal{O}/\Z} \mathbb{G}_m
\end{CD}
\end{equation}
where the lower map is the morphism which on $A$-valued points, for any ring $A$, is the morphism $A^\times \rightarrow (A \otimes \mathcal{O})^\times$ defined by $a \mapsto a \otimes 1$. Let 
$$
\mathfrak{H}^{g \pm}=\{\tau \in F \otimes_{\Q} \C \,|\, \pm \Im(\tau)\,\text{totally positive}\}.
$$ 
The group $G'(\R)$ acts on $\mathfrak{H}^{g \pm}$ via the embedding $G'(\R) \rightarrow \mathrm{GL}(2, \R)^g$ given by the $g$ embeddings $F \rightarrow \R$ and the pair $(G'_{\Q}, \mathfrak{H}^{g \pm})$ is a pure Shimura datum. Let $n \geq 3 $ be an integer. For any  prime ideal $\mathfrak{p}$ of $F$, let us denote by $\mathcal{O}_{\mathfrak{p}}$ the ring of integers of the $\mathfrak{p}$-adic completion of $F$ and by $\mathcal{D}_{\mathfrak{p}}$, resp. by $\mathcal{D}_{\mathfrak{p}}^{-1}$, the $\mathfrak{p}$-adic completion of $\mathcal{D}$, resp. $\mathcal{D}^{-1}$. Let $K'(n)$ be the compact open subgroup of $G'(\mbb{A}_f)$ defined as the product $K'(n)=\prod_{\mathfrak{p}} K'(n, \mathfrak{p})$ indexed by all prime ideals $\mathfrak{p}$ of $F$, where
$$
K'(n, \mathfrak{p})=\left\{ \begin{pmatrix}
a & b\\
c & d\\
\end{pmatrix} \in \mrm{GL}(2, F_\mathfrak{p}) \,|\, a,d \in 1+n\mathcal{O}_{\mathfrak{p}}, c \in n\mathcal{D}_{\mathfrak{p}}, b \in n \mathcal{D}^{-1}_{\mathfrak{p}} \right\}.
$$
We denote by $\mathcal{S}'$ the complex analytic Hilbert-Blumenthal variety 
$$
\mathcal{S}'=G'(\mbb{Q}) \backslash (\mathfrak{H}^{g \pm} \times G'(\mbb{A}_f)/K'(n))
$$
of level $K'(n)$. To describe a connected component of $\mathcal{S}'$, let us also consider the subgroup $\Gamma'(n)$ of $\mathrm{SL}(2, F)$ defined by
$$
\Gamma'(n)=\left\{ \begin{pmatrix}
a & b\\
c & d\\
\end{pmatrix} \in \mathrm{SL}(2, F)\,|\, a, d \in 1+n\mathcal{O}, c\in n \mathcal{D}, b \in n \mathcal{D}^{-1} \right\}.
$$
Then, the natural inclusion $\Gamma'(n) \backslash \mathfrak{H}^{g +} \rightarrow \mathcal{S}'$ identifies $\Gamma'(n) \backslash \mathfrak{H}^{g +}$ to a connected component of $\mathcal{S}'$. Let us form the semi-direct product $\Lambda'(n)=(\mathcal{D}^{-1} \oplus \mathcal{O}) \rtimes \Gamma'(n)$ where $\Gamma'(n)$ acts on $\mathcal{D}^{-1} \oplus \mathcal{O}$ by right matrix multiplication. Then $\Lambda'(n)$ acts on $\C^g \times \mathfrak{H}^g$ and we obtain an abelian scheme
\begin{equation} \label{univ-hb}
\pi': \Lambda'(n) \backslash (\C^g \times \mathfrak{H}^{g +}) \rightarrow \Gamma'(n) \backslash \mathfrak{H}^{g +}.
\end{equation}
Here, we refer the reader to \cite{blottiere} 2.2 for more details.

\subsection{A modular embedding} Let us explain how to map these Hilbert-Blumenthal varieties to the Siegel varieties defined above. Let $\sigma_1, \ldots, \sigma_g$ be the $g$ embeddings $F \rightarrow \R$ and let $(e_1, \ldots, e_g)$ be a basis of $\mathcal{O}$ over $\Z$.  As the map $F \times F \rightarrow \Q, (x, y) \mapsto \mathrm{Tr}_{F/\Q}(xy)$ is a non-degenerate bilinear form, we can define the dual basis $(e_1^*, \ldots, e_g^* )$ of $\mathcal{D}^{-1}$.  Let $R$ and $R'$ be the matrices 
$$
R=(\sigma_i(e_j))_{1 \leq i, j \leq g}, R'=(\sigma_i(e_j^*))_{1 \leq i, j \leq g}.
$$
They verify the identity $R'={}^t \! R^{-1}$. In the next proposition, we use the following notation: if $A$ is a $\Q$-algebra and $r \in A \otimes_{\Q} F$, we denote by $r^\star$ the diagonal $(A \otimes_{\Q} \R)$-valued matrix $\mrm{diag}((1 \otimes \sigma_1)(r), \ldots, (1 \otimes \sigma_g)(r))$. 

\begin{pro} \label{morph-Sh-data} The pair $(\overline{\iota}, \iota)$ where $\overline{\iota}: G'_{\Q} \rightarrow G_{\Q}$ is defined on $A$-valued points by
$$
\overline{\iota} \left( \begin{pmatrix}
a & b\\
c & d\\
\end{pmatrix} \right)=
\begin{pmatrix}
R' & 0\\
0 & R\\
\end{pmatrix}^{-1}
\begin{pmatrix}
a^\star & b^\star\\
c^\star & d^\star\\
\end{pmatrix}\begin{pmatrix}
R' & 0\\
0 & R\\
\end{pmatrix}
$$
and where $\iota: \mathfrak{H}^{g\,\pm} \rightarrow \mathfrak{H}_g^\pm$ is defined by
$$
\iota(\tau)=R'^{-1}\tau^\star R
$$
is a morphism of Shimura data
$$
(G'_{\Q},  \mathfrak{H}^{g\,\pm}) \rightarrow (G_{\Q},  \mathfrak{H}_g^\pm).
$$
\end{pro}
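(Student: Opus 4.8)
The plan is to check the three requirements of Definition~\ref{morphism-Shimura}: that $\overline{\iota}\colon G'_{\Q}\to G_{\Q}$ is a morphism of algebraic groups, that $\iota$ is a $G'(\R)$-equivariant map into $\mathfrak{H}_g^{\pm}$, and that $\overline{\iota}\circ h'_x=h_{\iota(x)}$ for every $x\in\mathfrak{H}^{g\pm}$, where $h'$ denotes the morphism of the Shimura datum $(G'_{\Q},\mathfrak{H}^{g\pm})$. The decisive point is the first one, and the idea is to recognise $\overline{\iota}$ as the base change of a map already defined over $\Q$. Write $\mathbf{e}=(e_1,\dots,e_g)$ and $\mathbf{e}^{*}=(e_1^{*},\dots,e_g^{*})$; the matrix $R$ (resp. $R'$) is the change of coordinates from the $\Q$-basis $\mathbf{e}$ (resp. $\mathbf{e}^{*}$) of $F$ to the coordinates on $F\otimes_{\Q}\R=\prod_i\R$ in which multiplication by an element $r$ acts by the diagonal matrix $r^{\star}$, and $R'={}^{t}\!R^{-1}$. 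Consequently, for $M=\left(\begin{smallmatrix}a&b\\c&d\end{smallmatrix}\right)$, the matrix $\overline{\iota}(M)$ is exactly the matrix, in the $\Q$-basis $(e_1^{*},\dots,e_g^{*},e_1,\dots,e_g)$ of $F\oplus F$, of the $\Q$-linear automorphism of $F\oplus F$ given by left multiplication by $M$, the first summand carrying $\mathcal{D}^{-1}$-coordinates and the second $\mathcal{O}$-coordinates. In this guise the definition extends verbatim with $F$ replaced by $A\otimes_{\Q}F$ for any $\Q$-algebra $A$, is manifestly multiplicative, and is defined over $\Q$, the four blocks of $\overline{\iota}(M)$ being entrywise values of $\mathrm{Tr}_{F/\Q}$. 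Finally $\overline{\iota}(M)$ lands in $G=\mathrm{GSp}(2g)$ because $(e_1^{*},\dots,e_g^{*},e_1,\dots,e_g)$ is a symplectic basis for $\langle(x_1,y_1),(x_2,y_2)\rangle=\mathrm{Tr}_{F/\Q}(x_1y_2-x_2y_1)$, with Gram matrix exactly the $\psi$ of \eqref{symplectic}, and a short expansion gives $\langle Mu,Mu'\rangle=(\det M)\,\langle u,u'\rangle$ for any $M$ with $\det M\in A^{\times}$ --- which is precisely what the cartesian square \eqref{groupeH} guarantees. Hence $\overline{\iota}(M)\in G(A)$ with multiplier $\det M$.

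Next I would deal with $\iota$. Writing $\iota(\tau)={}^{t}\!R\,\tau^{\star}R$ and using that $\tau^{\star}$ is diagonal, $\iota(\tau)$ is symmetric, and $\Im\iota(\tau)={}^{t}\!R\,(\Im\tau)^{\star}R$ is positive (resp. negative) definite when $\Im\tau$ is totally positive (resp. negative); so $\iota(\mathfrak{H}^{g\pm})\subseteq\mathfrak{H}_g^{\pm}$. For equivariance, since $\gamma=\left(\begin{smallmatrix}a&b\\c&d\end{smallmatrix}\right)\in G'(\R)$ acts on $\mathfrak{H}^{g\pm}$ coordinatewise through the $g$ embeddings, one has $(\gamma\cdot\tau)^{\star}=(a^{\star}\tau^{\star}+b^{\star})(c^{\star}\tau^{\star}+d^{\star})^{-1}$; substituting the block form of $\overline{\iota}(\gamma)$ into $(AZ+B)(CZ+D)^{-1}$ the factors of $R$ and $R'$ telescope, yielding $\overline{\iota}(\gamma)\cdot\iota(\tau)=(R')^{-1}(\gamma\cdot\tau)^{\star}R=\iota(\gamma\cdot\tau)$.

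For the compatibility with the $h$-morphisms, by the two previous steps both $x\mapsto\overline{\iota}\circ h'_x$ and $x\mapsto h_{\iota(x)}$ are $G'(\R)$-equivariant maps $\mathfrak{H}^{g\pm}\to\mathrm{Hom}(\mathbb{S}_{\C},G_{\C})$, and $G'(\R)$ acts transitively on $\mathfrak{H}^{g\pm}$, so it suffices to check the identity at one point. I would take $\tau_0=i\in\mathfrak{H}^{g+}$, i.e. $\sigma_i(\tau_0)=\sqrt{-1}$ for every $i$; by construction the associated morphism $h'_{\tau_0}$ sends $z=x+iy$ to the element of $G'(\R)$ all of whose components in $\mathrm{GL}(2,\R)^g$ equal $\left(\begin{smallmatrix}x&y\\-y&x\end{smallmatrix}\right)$. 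Then $a^{\star}=xI_g$, $b^{\star}=yI_g$, etc., so $\overline{\iota}(h'_{\tau_0}(z))=P^{-1}\left(\begin{smallmatrix}xI_g&yI_g\\-yI_g&xI_g\end{smallmatrix}\right)P$ with $P=\left(\begin{smallmatrix}R'&0\\0&R\end{smallmatrix}\right)$. On the other hand $\iota(\tau_0)=i\,{}^{t}\!R\,R=P^{-1}\cdot(iI_g)$, and since $P\in\mathrm{Sp}(2g)(\R)\subseteq G(\R)$, the fact that the identification of $\mathfrak{H}_g^{\pm}$ with the conjugacy class of $h$ is $G(\R)$-equivariant and carries $h$ to $iI_g$ gives $h_{\iota(\tau_0)}=P^{-1}hP=P^{-1}\left(\begin{smallmatrix}xI_g&yI_g\\-yI_g&xI_g\end{smallmatrix}\right)P$ by \eqref{morphism-h}. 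The two expressions coincide, which completes the argument.

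The only serious obstacle is the first step: once $\overline{\iota}$ is identified with the descent to $\Q$ of the evident map on symplectic modules --- which simultaneously explains why the target form is exactly $\psi$ and why the similitude character takes scalar values --- all the remaining verifications are routine matrix computations performed over $\R$.
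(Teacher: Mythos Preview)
Your proof is correct and the key step --- showing that $\overline{\iota}$ lands in $G$ by recognising that the trace symplectic form $\mathrm{Tr}_{F/\Q}(x_1y_2-x_2y_1)$ on $F\oplus F$ has Gram matrix exactly $\psi$ in the basis $(e_1^{*},\dots,e_g^{*},e_1,\dots,e_g)$ --- is the same idea as in the paper, which phrases it as ${}^{t}\!P\,\psi'\,P=\psi$ with $P=\left(\begin{smallmatrix}R'&0\\0&R\end{smallmatrix}\right)$. The paper declares the remaining checks (that $\iota$ lands in $\mathfrak{H}_g^{\pm}$, equivariance, and compatibility with the $h$-maps) obvious and omits them, whereas you carry them out explicitly; both are fine.
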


\begin{proof}
The only fact that is not obvious is that, for any $\Q$-algebra $A$, the morphism $\overline{\iota}$ maps $G'_{\Q}(A)$ to $G_{\Q}(A)$. To prove this point, let us regard $\mathcal{D}^{-1} \oplus \mathcal{O}$ as embedded in $\R^g \oplus \R^g$ via the $g$ embeddings $F \rightarrow \R$. Then, we have
$$
\mathcal{D}^{-1} \oplus \mathcal{O}=\begin{pmatrix}
R' & 0\\
0 & R\\
\end{pmatrix}\Z^g \oplus \Z^g
$$
where $\Z^g \oplus \Z^g$ is the standard lattice in $\R^g \oplus \R^g$. If we denote by $\psi': (\mathcal{D}^{-1} \oplus \mathcal{O})^{\oplus 2} \rightarrow \Z$ the symplectic form defined by $\psi'((x_1, y_1), (x_2, y_2))=\mathrm{Tr}_{F/\Q}(x_1 y_2-y_1 x_2),$
it follows from an easy computation that
$$
\begin{pmatrix}
{}^t \! R' & 0\\
0 & {}^t \! R\\
\end{pmatrix} \psi' \begin{pmatrix}
R' & 0\\
0 & R\\
\end{pmatrix}=\psi
$$
where $\psi$ is the symplectic form (\ref{symplectic}). As a consequence, the morphism $\overline{\iota}$ is well defined.
\end{proof}

Let 
$$
\mathrm{SL}(\mathcal{D}^{-1} \oplus \mathcal{O})=\left\{ \begin{pmatrix}
a & b\\
c & d\\
\end{pmatrix} \in \mathrm{SL}(2, F)\,|\, a, d \in \mathcal{O}, c\in \mathcal{D}, b \in \mathcal{D}^{-1} \right\}.
$$
Then $\mathrm{SL}(\mathcal{D}^{-1} \oplus \mathcal{O})$ sends $\mathcal{D}^{-1} \oplus \mathcal{O}$ to itself by right matrix multiplication. As a consequence $\overline{\iota}$ sends  $\mathrm{SL}(\mathcal{D}^{-1} \oplus \mathcal{O})$ to $G^1(\Z)$ and $\Gamma'(n)$ to $\Gamma(n)$. In particular, the pair $(\overline{\iota}, \iota)$ induces a holomorphic map
\begin{equation} \label{mod-embedding}
{\iota}: \Gamma'(n) \backslash \mathfrak{H}^{g +} \rightarrow \Gamma(n) \backslash \mathfrak{H}_g^+.
\end{equation}
Taking the sum of the different maps $\iota$ over all connected components we obtain a complex analytic morphism
\begin{equation} \label{mod-embedding'}
\iota: \mathcal{S}' \rightarrow \mathcal{S},
\end{equation}
which is in fact algebraic because it is induced by a morphism of Shimura data (\cite{pink} Prop. 11.10). The proof of the following result is easy and left to the reader.

\begin{pro} \label{carre-cartesien} The square in the category of $\C$-schemes
$$
\begin{CD}
\Lambda'(n) \backslash (\C^g \times \mathfrak{H}^{g +}) @>  f_ {R' R}\times {\iota} >> \Lambda(n) \backslash (\C^g \times \mathfrak{H}_g^+)\\
@V\pi' VV                                                                                                                    @V\pi VV\\                                                                                         
\Gamma'(n) \backslash \mathfrak{H}^{g +} @>{\iota}>> \Gamma(n) \backslash \mathfrak{H}_g^+,
\end{CD}
$$
where the map $f_{R' R}: \C^g=\R^g \oplus \R^g \rightarrow \C^g=\R^g \oplus \R^g$ is the map given by left multiplication by $\begin{pmatrix}
R' & 0\\
0 & R\\
\end{pmatrix}^{-1}$, is cartesian.
\end{pro}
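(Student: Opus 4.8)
The plan is to verify the assertion on universal covers and then descend. Since $n \geq 3$, the groups $\Gamma(n)$ and $\Gamma'(n)$ are torsion free and act freely and properly discontinuously on the simply connected domains $\mathfrak{H}_g^+$ and $\mathfrak{H}^{g +}$, and likewise $\Lambda(n)$ acts freely and properly discontinuously on $\C^g \times \mathfrak{H}_g^+$ and $\Lambda'(n)$ on $\C^g \times \mathfrak{H}^{g +}$; hence each of the four terms of the square is the quotient of its simply connected universal cover by the indicated group. On these covers the lower horizontal map is induced by $\widetilde{\iota} \colon \tau \mapsto R'^{-1} \tau^\star R$, which is equivariant for $\overline{\iota} \colon \Gamma'(n) \to \Gamma(n)$, and the upper one by $(z, \tau) \mapsto (f_{R'R}(z), \widetilde{\iota}(\tau))$, which is equivariant for the homomorphism $\phi \colon \Lambda'(n) \to \Lambda(n)$ restricting to $\overline{\iota}$ on the $\Gamma$-factors and to $f_{R'R}|_{\mathcal{D}^{-1} \oplus \mathcal{O}}$ on the lattice factors; the two vertical maps are the projections.

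I would then isolate two facts. First, the square of covers
$$
\begin{CD}
\C^g \times \mathfrak{H}^{g +} @>>> \C^g \times \mathfrak{H}_g^+ \\
@VVV @VVV \\
\mathfrak{H}^{g +} @>\widetilde{\iota}>> \mathfrak{H}_g^+
\end{CD}
$$
is cartesian: $\widetilde{\iota}$ is injective because $\tau^\star$ determines $\tau$, and $f_{R'R}$ is a linear automorphism of $\C^g$ since $R$ and $R'$ are invertible, so $(z, \tau) \mapsto ((f_{R'R}(z), \widetilde{\iota}(\tau)), \tau)$ is a biholomorphism of $\C^g \times \mathfrak{H}^{g +}$ onto the fibre product. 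Second, the square of groups
$$
\begin{CD}
\Lambda'(n) @>\phi>> \Lambda(n) \\
@VVV @VVV \\
\Gamma'(n) @>\overline{\iota}>> \Gamma(n)
\end{CD}
$$
is cartesian, that is, $\phi$ identifies $\Lambda'(n)$ with $\Lambda(n) \times_{\Gamma(n)} \Gamma'(n)$. The latter group is $\Z^{2g} \rtimes \Gamma'(n)$ for the action of $\Gamma'(n)$ on $\Z^{2g}$ through $\overline{\iota}$, so the claim amounts to $f_{R'R}$ restricting to an isomorphism $\mathcal{D}^{-1} \oplus \mathcal{O} \xrightarrow{\sim} \Z^{2g}$ that intertwines the two $\Gamma'(n)$-actions; this is precisely the identity $\mathcal{D}^{-1} \oplus \mathcal{O} = \begin{pmatrix} R' & 0 \\ 0 & R \end{pmatrix} (\Z^g \oplus \Z^g)$ and the conjugation formula defining $\overline{\iota}$, both of which are established in the proof of Proposition \ref{morph-Sh-data}.

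Granting the two facts, the conclusion follows by the standard mechanism "the quotient of a cartesian square of covers by a cartesian square of deck groups is cartesian", carried out as a diagram chase on the comparison map from $\Lambda'(n) \backslash (\C^g \times \mathfrak{H}^{g +})$ to the fibre product of $\Lambda(n) \backslash (\C^g \times \mathfrak{H}_g^+)$ and $\Gamma'(n) \backslash \mathfrak{H}^{g +}$ over $\Gamma(n) \backslash \mathfrak{H}_g^+$. For surjectivity one represents a point of the target by a pair in $(\C^g \times \mathfrak{H}_g^+) \times \mathfrak{H}^{g +}$, lifts to $\Lambda(n)$ an element of $\Gamma(n)$ matching up the two images in $\mathfrak{H}_g^+$, and applies the first fact to obtain a preimage. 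For injectivity, if two points of the source have the same image one chooses lifts and gets $\lambda \in \Lambda(n)$ and $\gamma' \in \Gamma'(n)$ relating them on the two coordinates; pushing the relation involving $\lambda$ down to $\mathfrak{H}_g^+$ and using that $\Gamma(n)$ acts freely there forces the image of $\lambda$ in $\Gamma(n)$ to be $\overline{\iota}(\gamma')$, so $(\lambda, \gamma')$ defines an element of $\Lambda'(n)$ by the second fact, and this element carries one lift to the other. Thus the comparison map is a bijective holomorphic map of complex manifolds, hence a biholomorphism; as all four varieties are algebraic over $\C$ by \cite{borel} and the horizontal maps come from a morphism of Shimura data by \cite{pink}, the square is cartesian in the category of $\C$-schemes.

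The only genuine content lies in the second fact, and it has in substance already been done within the proof of Proposition \ref{morph-Sh-data}; the rest is bookkeeping, the one point requiring care being to record that the four group actions are free, which is exactly where the hypothesis $n \geq 3$ is used.
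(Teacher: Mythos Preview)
The paper omits the proof entirely, declaring it ``easy and left to the reader'', so there is no argument to compare against. Your approach via universal covers and deck transformation groups is a correct and natural way to carry out the verification, and you have rightly identified that the only substantive input is the lattice identity $\mathcal{D}^{-1} \oplus \mathcal{O} = \begin{pmatrix} R' & 0 \\ 0 & R \end{pmatrix}(\Z^g \oplus \Z^g)$ together with the conjugation formula defining $\overline{\iota}$, both already recorded in the proof of Proposition~\ref{morph-Sh-data}.

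Two minor remarks. First, the injectivity of $\widetilde{\iota}$ that you invoke is not actually needed for your first fact: the pullback of a product projection $\C^g \times Y \to Y$ along \emph{any} map $X \to Y$ is $\C^g \times X$, so the cover square is cartesian simply because $f_{R'R}$ is a linear automorphism. Second, for the passage from the analytic statement to the category of $\C$-schemes, rather than appealing to \cite{pink} for the top map it is cleaner to note that the comparison map is a morphism of abelian schemes over $\Gamma'(n) \backslash \mathfrak{H}^{g+}$ which is an isomorphism on every fibre; this already forces it to be an isomorphism of schemes.

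This last observation also points to the more direct argument the author presumably has in mind: both $\pi'$ and $\iota^*\pi$ are abelian schemes over the same base, and the induced map between them is an isomorphism because on each fibre it is the linear isomorphism $f_{R'R}$ carrying one period lattice onto the other---which is precisely the lattice identity above. Your structured argument reaches the same conclusion with the same input, just organised differently.
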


\subsection{The Baily-Borel compactifications} In this section, let us denote by $G/\Q$ an arbitrary linear algebraic reductive group, which underlies a pure Shimura datum $(G, \mathfrak{H})$ (\cite{pink} Def. 2.1). We will only be interested in the case where $G$ is the group $\mrm{GSp}(2g)$ or the group denoted by $G'$ in the diagram (\ref{groupeH}). In this section, we wish to briefly recall the construction of the Baily-Borel compactification of the Shimura variety attached to $(G, \mathfrak{H})$ and the stratification of its boundary. We follow the presentation of \cite{burgos-wildeshaus} 1.\\

The Shimura varieties attached to $(G, \mathfrak{H})$ are indexed by compact open subgroups $K$ of $G(\mathbb{A}_f)$. Let $K \subset G(\mathbb{A}_f)$ be such a subgroup, which we assume to be neat (see \cite{pink} 0.6 for a definition of neatness). Then, the set of complex points of the corresponding variety $M^K(G, \mathfrak{H})$ over $\C$ is given by
$$
M^K(G, \mathfrak{H})(\C)=G(\Q) \backslash (\mathfrak{H} \times G(\mathbb{A}_f)/K).
$$
In order to describe the Baily-Borel compactification $M^K(G, \mathfrak{H})^*$ of $M^K(G, \mathfrak{H})$ recall that for any admissible parabolic subgroup $Q$ of $G$ (\cite{pink} Def. 4.5) there is associated a canonical normal subgroup $P_1$ of $Q$ (\cite{pink} 4.7). There is a finite collection of rational boundary components $(P_1, \mathfrak{X}_1)$ (\cite{pink} 4.11) which are mixed Shimura data. Denote by $W_1$ the unipotent radical of $P_1$ and by $(G_1, \mathfrak{H}_1)$ the quotient of $(P_1, \mathfrak{X}_1)$ by $W_1$ (\cite{pink} Prop. 2.9). One defines
$$
\mathfrak{H}^*=\bigsqcup_{(P_1, \mathfrak{X}_1)} \mathfrak{H}_1
$$
where the disjoint union is indexed by all rational boundary components $(P_1, \mathfrak{X}_1)$ of $(G, \mathfrak{H})$. This set comes equipped with the Satake topology (\cite{pink} 6.2) as well as a natural action of the group $G(\Q)$ (\cite{pink} 4.16). Let
$$
M^K(G, \mathfrak{H})^*(\C)=G(\Q) \backslash (\mathfrak{H}^* \times G(\mathbb{A}_f)/K)
$$
equipped with the quotient topology. By \cite{pink} 8.2, this space can be canonically identified with the space of $\C$-valued points of a normal projective variety $M^K(G, \mathfrak{H})^*$ over $\C$ containing $M^K(G, \mathfrak{H})$ as a Zariski dense open subset. The stratification of $\mathfrak{H}^*$ induces a stratification of $M^K(G, \mathfrak{H})^*$ as follows. Fix an admissible parabolic subgroup $Q$ of $G$ and let $(P_1, \mathfrak{X}_1)$, $W_1$ and
$$
p: (P_1, \mathfrak{X}_1) \rightarrow (G_1, \mathcal{H}_1)=(P_1, \mathfrak{X}_1)/W_1
$$
be as above. Let $g \in G(\mbb{A}_f)$, let $K'=gKg^{-1}$ and let $K_1=P_1(\mbb{A}_f) \cap K'$. We have the following natural morphisms
$$
\begin{CD}
M^{p(K_1)}(G_1, \mathfrak{H}_1)(\C)=
G_1(\Q) \backslash (\mathfrak{H}_1 \times G_1(\mathbb{A}_f)/p(K_1))\\
@AAA \\
P_1(\Q) \backslash (\mathfrak{H}_1 \times P_1(\mathbb{A}_f)/K_1) \\
@VVV \\
M^K(G, \mathfrak{H})^*(\C)=G(\Q) \backslash (\mathfrak{H}^* \times G(\mathbb{A}_f)/K)
\end{CD}
$$
where the first map is induced by the map $(x, h) \mapsto (x, p(h))$ and the second map is induced by $(x, h) \mapsto (x, hg)$. The first map is an isomorphism of complex analytic variteties. Hence, we obtain a morphism 
\begin{equation} \label{strate}
M^{p(K_1)}(G_1, \mathfrak{H}_1) \rightarrow M^K(G, \mathfrak{H})^*
\end{equation}
which depends on the rational boundary component $(P_1, \mathfrak{X}_1)$ and on $g$. When $(P_1, \mathfrak{X}_1)$ and $g$ vary, the images of the morphisms (\ref{strate}) form a stratification of $M^K(G, \mathfrak{H})^*$.\\

Let us make explicit some of the notions introduced above in the case where $(G, \mathfrak{H})$ is the Shimura datum $(G', \mathfrak{H}^{g \pm})$ and in the case where $(G, \mathfrak{H})$ is the Shimura datum $(\mrm{GSp}(2g), \mathfrak{H}_g^\pm)$. Up to conjugacy, the unique admissible parabolic subgroup of the group $G'$ defined by diagram (\ref{groupeH}) is the standard Borel subgroup $B'$ of $G'$, i.e. the intersection with $G'$ of the subgroup of upper triangular matrices in $\mathrm{Res}_{F/\Q} \mathrm{GL}(2)$. The canonical normal subgroup $P'$ of $B'$ is the intersection with $G'$ of matrices of the shape $
\begin{pmatrix}
* & *\\
0 & 1\\
\end{pmatrix}$
in $\mathrm{Res}_{F/\Q} \mathrm{GL}(2)$. Let $W'$ be the unipotent radical of $P'$ and let $p': P' \rightarrow P'/W'=\mbb{G}_m$ be the canonical projection. The quotient of the rational boundary component $(P', \mathfrak{X}')$ by $W'$ is the Shimura datum $(\mathbb{G}_m, \mathfrak{H}_0^\pm)$ defined in \cite{pink} Example 2.8 as follows: let $k: \mbb{S} \rightarrow \mbb{G}_{m \R}$ be the morphism inducing $z \mapsto z \overline{z}$ on real points and let $\mathfrak{H}_0^\pm$ be a set with two elements endowed with the unique non-trivial action of $\mbb{G}_m(\R)$ which factors through $\pi_0(\mbb{G}_m(\R))$. We map $\mathfrak{H}_0^\pm$ to $\mrm{Hom}(\mbb{S}_{\C}, \mbb{G}_{m \C})$ by the constant map equal to $k$. We have the diagram
\begin{equation} \label{diagram-HB}
\begin{CD}
\mathcal{S}' @>j'>> \mathcal{S}^{' *} @<i'<< \partial \mathcal{S}'
\end{CD}
\end{equation}
where $j'$ denotes the open immersion from $\mathcal{S}'$ to its Baily-Borel compactification and $i'$ denotes the complementary closed embedding. As the Shimura varieties attached to $(\mathbb{G}_m, \mathfrak{H}_0^\pm)$ are of dimension zero (\cite{pink} Example 3.16), the boundary $\partial \mathcal{S}'$ is of dimension zero.\\

The standard admissible parabolic subgroups $Q$ of $G$ and their canonical normal subgroups $P_1$ are described in \cite{morel} 1.2. In particular, for any rational boundary component $(P_1, \mathfrak{X}_1)$ of $(G, \mathfrak{H}_g^\pm)$, if $W_1$ denotes the unipotent radical of $P_1$, we know that the quotient Shimura datum $(P_1, \mathfrak{X}_1)/W_1$ is either $(\mathbb{G}_m, \mathfrak{H}_0^\pm)$ or $(\mathrm{GSp}(2r), \mathfrak{H}_r^\pm)$ for an integer $1 \leq r \leq g$. Similarly as above, we have the diagram
\begin{equation} \label{diagram-siegel}
\begin{CD}
\mathcal{S} @>j>> \mathcal{S}^{*} @<i<< \partial \mathcal{S}
\end{CD}
\end{equation}
where $j$ denotes the open immersion from $\mathcal{S}$ to its Baily-Borel compactification and $i$ denotes the complementary closed embedding. Let us denote by $\partial \mathcal{S}_0$ the stratum of the boundary whose underlying rational boundary component $(P_1, \mathfrak{X}_1)$ is such that the identity $(P_1, \mathfrak{X}_1)/W_1=(\mathbb{G}_m, \mathfrak{H}_0^\pm)$ holds. Then $\partial \mathcal{S}_0$ has dimension zero.  

\begin{lem} \label{technical} For any integer $r \geq 1$, there exists no morphism of Shimura data
$$
(\mathbb{G}_m, \mathfrak{H}_0^\pm) \rightarrow (\mathrm{GSp}(2r), \mathfrak{H}_r^\pm).
$$
\end{lem}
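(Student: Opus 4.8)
The plan is to derive a contradiction from the compatibility of the $h$-maps imposed in Definition \ref{morphism-Shimura}, by comparing the weight cocharacters of the two Shimura data. So assume for contradiction that $(\phi, \psi)$ is such a morphism, with $\phi : \mathbb{G}_m \to \mathrm{GSp}(2r)$ a homomorphism of algebraic groups and $\psi : \mathfrak{H}_0^\pm \to \mathfrak{H}_r^\pm$. Fix any $x \in \mathfrak{H}_0^\pm$ and set $\tau = \psi(x) \in \mathfrak{H}_r^\pm$. Since the $h$-datum of $(\mathbb{G}_m, \mathfrak{H}_0^\pm)$ is the constant map equal to the morphism $k : \mathbb{S} \to \mathbb{G}_{m,\mathbb{R}}$ inducing $z \mapsto z\bar z$ on real points (\cite{pink} Example 2.8), the commutative square of Definition \ref{morphism-Shimura} says precisely that
$$
h_\tau = \phi_{\mathbb{C}} \circ k
$$
as morphisms $\mathbb{S}_{\mathbb{C}} \to \mathrm{GSp}(2r)_{\mathbb{C}}$, where $h_\tau$ denotes the homomorphism in the $\mathrm{GSp}(2r)(\mathbb{R})$-conjugacy class of (\ref{morphism-h}) corresponding to $\tau$.

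First I would restrict both sides of this identity to the central cocharacter $w : \mathbb{G}_{m} \hookrightarrow \mathbb{S}$, namely the one inducing $\mathbb{R}^\times \hookrightarrow \mathbb{C}^\times$ on real points, whose complexification is the diagonal embedding $\mathbb{G}_{m,\mathbb{C}} \hookrightarrow \mathbb{S}_{\mathbb{C}} = \mathbb{G}_{m,\mathbb{C}} \times \mathbb{G}_{m,\mathbb{C}}$. On the right, $k \circ w$ is the squaring map $s \mapsto s^2$, because $k(s) = s\bar s = s^2$ for $s \in \mathbb{R}^\times$. On the left, $h_\tau \circ w$ is the map $s \mapsto s\cdot I_{2r}$: indeed, restricting the formula (\ref{morphism-h}) to $y = 0$ gives $s \mapsto s \cdot I_{2r}$, and this element is central in $\mathrm{GSp}(2r)$, hence unchanged when we conjugate (\ref{morphism-h}) to get $h_\tau$. (Group-theoretically this is just the statement that every $\tau \in \mathfrak{H}_r^\pm$ defines a weight $\pm 1$ Hodge structure on the standard representation, with central weight cocharacter.) Therefore the displayed identity forces $\phi(s^2) = s \cdot I_{2r}$, that is $\phi(s)^2 = s \cdot I_{2r}$, as an equality of morphisms $\mathbb{G}_{m,\mathbb{C}} \to \mathrm{GSp}(2r)_{\mathbb{C}}$.

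To finish I would compose with the standard $2r$-dimensional representation $\rho : \mathrm{GSp}(2r) \hookrightarrow \mathrm{GL}_{2r}$ and diagonalize: $\rho \circ \phi$ decomposes the standard representation over $\mathbb{C}$ into weight spaces $\bigoplus_{n \in \mathbb{Z}} V_n$, on which $\phi(s)$ acts by $s^n$, so $\phi(s)^2$ acts on $V_n$ by $s^{2n}$, while $s\cdot I_{2r}$ acts everywhere by $s$. Comparing, each nonzero $V_n$ would have to satisfy $2n = 1$, impossible for $n \in \mathbb{Z}$. Hence no such $\phi$, and a fortiori no morphism of Shimura data, can exist. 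The only mildly delicate point is the identification of the two restrictions $h_\tau \circ w$ and $k \circ w$ to the central torus; once these weight cocharacters are pinned down the contradiction is immediate, and one could phrase the argument even more briefly as the observation that a morphism of pure Shimura data respects weight filtrations, while the standard representation carries weight $2$ on the source and weight $1$ on the target.
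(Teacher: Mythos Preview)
Your argument is correct and is essentially the same as the paper's: both obtain the identity $h_\tau=\phi\circ k$ from Definition~\ref{morphism-Shimura}, precompose with the weight cocharacter $w$ to see that the weights of $\phi$ on the standard representation must be doubled, and then contradict this with the computation $h_\tau\circ w:s\mapsto s\cdot I_{2r}$ (all weights equal to $1$). The only cosmetic difference is that the paper phrases the contradiction as ``the weights $2w_i$ are even but $h_r\circ w$ has weight $1$,'' while you phrase it as ``$2n=1$ has no integer solution.''
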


\begin{proof} Let us assume the contrary. According to Def. \ref{morphism-Shimura}, this implies that there exists a morphism $\Psi: \mbb{G}_m \rightarrow \mrm{GSp}(2r)$ such that the morphism $h_r: \mbb{S} \rightarrow \mrm{GSp}(2r)_{\R}$ defined by (\ref{morphism-h}) factors as $h_r=\Psi_{\R} \circ k$. Let us denote by $\{w_1, w_2, \ldots, w_k\}$ the weights, counted with multiplicities, of $\Psi$ on the vector space $\Q^{2r}$ on which $\mrm{GSp}(2r)$ acts naturally. Let $w: \mbb{G}_{m \R} \rightarrow \mbb{S}$ be the weight cocharacter, which is the cocharacter inducing the inclusion $\R^\times \subset \C^\times$ on real points. Then,  the composition $h_r \circ w$ has even weights $\{2w_1, 2w_2, \ldots, 2w_k\}$, which contraditcs the fact that the morphism $h_r \circ w$ induces $x \mapsto \mrm{diag}(x, \ldots, x)$ on real points.
\end{proof}

\begin{pro} \label{map-boundary} There exists a continuous map $\tilde{\iota}: \mathcal{S}^{' *} \rightarrow \mathcal{S}^*$ which is part of the following diagram with cartesian squares in the category of topological spaces
\begin{equation} \label{diagram}
\begin{CD}
\mathcal{S}' @>j' >> \mathcal{S}^{' *} @<i' << \partial \mathcal{S}'\\
@V\iota VV                  @V\tilde{\iota}VV                                   @V\partial \iota VV\\
\mathcal{S} @>j>>    \mathcal{S}^*      @<i<< \partial \mathcal{S}_0.\\
\end{CD}
\end{equation}
\end{pro}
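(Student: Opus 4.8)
The plan is to build $\tilde{\iota}$ by Pink's functoriality of the Baily--Borel compactification applied to the morphism of Shimura data $(\overline{\iota}, \iota)$ of Proposition \ref{morph-Sh-data}, and then to control its restriction to the boundary by means of Lemma \ref{technical}. Concretely, I would first invoke \cite{pink} (see also \cite{burgos-wildeshaus}): since $(\overline{\iota}, \iota): (G'_{\Q}, \mathfrak{H}^{g\,\pm}) \to (G_{\Q}, \mathfrak{H}_g^\pm)$ is a morphism of pure Shimura data carrying the neat level $K'(n)$ into the neat level $K(n)$, it induces --- besides $\iota: \mathcal{S}' \to \mathcal{S}$ --- a continuous (in fact algebraic) map $\tilde{\iota}: \mathcal{S}^{' *} \to \mathcal{S}^*$ with $\tilde{\iota} \circ j' = j \circ \iota$, compatible with the stratifications of source and target in the following sense: for the (up to conjugacy) unique proper rational boundary component $(P', \mathfrak{X}')$ of $(G'_{\Q}, \mathfrak{H}^{g\,\pm})$, whose quotient Shimura datum is $(\mathbb{G}_m, \mathfrak{H}_0^\pm)$, there is a rational boundary component $(P_1, \mathfrak{X}_1)$ of $(G_{\Q}, \mathfrak{H}_g^\pm)$ with $\overline{\iota}(P') \subset P_1$, inducing a morphism of quotient Shimura data $(\mathbb{G}_m, \mathfrak{H}_0^\pm) \to (G_1, \mathfrak{H}_1)$, and $\tilde{\iota}$ maps the associated stratum of $\mathcal{S}^{' *}$ into the stratum of $\mathcal{S}^*$ attached to $(P_1, \mathfrak{X}_1)$.

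Next I would pin down this target stratum. By the classification of rational boundary components of $(G_{\Q}, \mathfrak{H}_g^\pm)$ recalled above from \cite{morel}, the quotient Shimura datum $(G_1, \mathfrak{H}_1)$ is either $(\mathbb{G}_m, \mathfrak{H}_0^\pm)$ or $(\mathrm{GSp}(2r), \mathfrak{H}_r^\pm)$ for some $1 \leq r \leq g$. As it receives a morphism of Shimura data from $(\mathbb{G}_m, \mathfrak{H}_0^\pm)$, Lemma \ref{technical} rules out the second possibility, so $(G_1, \mathfrak{H}_1) = (\mathbb{G}_m, \mathfrak{H}_0^\pm)$ and the target stratum lies in $\partial \mathcal{S}_0$ by the very definition of the latter. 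Since $\partial \mathcal{S}'$ is covered by the strata attached to $(P', \mathfrak{X}')$ --- there being, up to conjugacy, no other rational boundary component --- this gives $\tilde{\iota}(\partial \mathcal{S}') \subset \partial \mathcal{S}_0$. Restricting $\tilde{\iota}$ supplies the continuous map $\partial \iota: \partial \mathcal{S}' \to \partial \mathcal{S}_0$ with $\tilde{\iota} \circ i' = i \circ \partial \iota$, and the left square of (\ref{diagram}) already commutes because $\tilde{\iota}$ extends $\iota$.

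Finally I would check that both squares are cartesian in the category of topological spaces. Since $j$ is a topological embedding with open image and $i$ one with closed image, the fibre products $\mathcal{S} \times_{\mathcal{S}^*} \mathcal{S}^{' *}$ and $\partial \mathcal{S}_0 \times_{\mathcal{S}^*} \mathcal{S}^{' *}$ are identified, via the projection to $\mathcal{S}^{' *}$, with the subspaces $\tilde{\iota}^{-1}(\mathcal{S})$ and $\tilde{\iota}^{-1}(\partial \mathcal{S}_0)$ of $\mathcal{S}^{' *}$. Writing $\mathcal{S}^{' *} = \mathcal{S}' \sqcup \partial \mathcal{S}'$ and using $\tilde{\iota}(\mathcal{S}') = \iota(\mathcal{S}') \subset \mathcal{S}$ together with $\tilde{\iota}(\partial \mathcal{S}') \subset \partial \mathcal{S}_0 \subset \mathcal{S}^* \smallsetminus \mathcal{S}$, one gets $\tilde{\iota}^{-1}(\mathcal{S}) = \mathcal{S}'$ and $\tilde{\iota}^{-1}(\partial \mathcal{S}_0) = \partial \mathcal{S}'$; the subspace topologies coincide with the original ones because $\mathcal{S}'$ is open and $\partial \mathcal{S}'$ closed in $\mathcal{S}^{' *}$, and under these identifications the vertical projections become $\iota$ and $\partial \iota$ while the horizontal maps become $j'$ and $i'$. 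That is precisely the claim. The one substantive input is Pink's functoriality of the Baily--Borel compactification in the form used in the first step --- in particular its compatibility with the boundary stratification and the resulting morphisms between quotient Shimura data of boundary components --- and that is the point on which I would concentrate; everything afterwards is a diagram chase.
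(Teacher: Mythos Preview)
Your proposal is correct and follows essentially the same approach as the paper's own proof: invoke Pink's functoriality of rational boundary components and of the Baily--Borel compactification (\cite{pink} 4.16) applied to the morphism of Shimura data of Proposition~\ref{morph-Sh-data}, and then use Lemma~\ref{technical} to force the induced morphism of quotient boundary Shimura data $(\mathbb{G}_m,\mathfrak{H}_0^\pm)\to (P_1,\mathfrak{X}_1)/W_1$ to land in $(\mathbb{G}_m,\mathfrak{H}_0^\pm)$ rather than in some $(\mathrm{GSp}(2r),\mathfrak{H}_r^\pm)$. The only difference is cosmetic: you spell out the elementary topological verification that the two squares are cartesian, whereas the paper absorbs this into the phrase ``construction of the Baily--Borel compactification.''
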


\begin{proof} According to Prop. \ref{morph-Sh-data}, we have a morphism of Shimura data 
$$
(G'_{\Q},  \mathfrak{H}^{g\,\pm}) \rightarrow (G_{\Q},  \mathfrak{H}_g^\pm).
$$
Hence, the existence of a continuous map $\tilde{\iota}^{an}: (\mathcal{S}^{' *})^{an} \rightarrow (\mathcal{S}^*)^{an}$ such that the above left hand square is cartesian follows from the functoriality of rational boundary components (\cite{pink} 4.16) and from the construction of the Baily-Borel compactification as described above. We want to show that the right hand square is cartesian. Let $(P_1, \mathfrak{X}_1)$ be the rational boundary component of $(G, \mathfrak{H}_g^\pm)$ associated to $(P', \mathfrak{X}')$ by the construction of \cite{pink} 4.16 and let $W_1$ be the unipotent radical of $P_1$. By the construction of $\tilde{\iota}$ that we just scketched, we need to show that $
(P_1, \mathfrak{X}_1)/W_1=(\mbb{G}_m, \mathfrak{H}_0^\pm).$
By \cite{pink} 4.16 p. 87, we have a morphism of Shimura data
 $$
(P', \mathfrak{X}')/W'= (\mbb{G}_m, \mathfrak{H}_0^\pm) \rightarrow (P_1, \mathfrak{X}_1)/W_1.
$$
Hence the statement follows from Lem. \ref{technical} and from the construction of the Baily-Borel compactification.
\end{proof}

\section{The residue of Eisenstein classes of Siegel varieties}

Let $g \geq 1$ and $n \geq 3$ be two integers and let $\mathcal{S}$ be the Siegel variety of genus $g$ and level $n$ (see section \ref{siegel-var}). We have the universal abelian scheme $\mathcal{A}/\mathcal{S}$. Let $F$ be a totally real number field of degree $g$ and let $\mathcal{S}'$ be the associated Hilbert-Blumenthal variety of level $n$ (see section \ref{hb-var}). 

\begin{lem} \label{dc} Let $F \in D^b_c(\mathcal{S}, \Q)$. Then, the morphisms of diagram (\ref{diagram}) induce a commutative diagram
\begin{equation} \label{diagram-coho}
\begin{CD}
H^{2g-1}(\mathcal{S}, F) @>>> H^{0}(\partial \mathcal{S}_0, i^*R^{2g-1}j_* F)\\
@VVV                                              @VVV\\
H^{2g-1}(\mathcal{S}', \iota^* F) @>>> H^{0}(\partial \mathcal{S}', i'^*R^{2g-1}j'_* \iota^*F).\\
\end{CD}
\end{equation}
\end{lem}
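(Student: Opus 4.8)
The plan is to realize every arrow of (\ref{diagram-coho}) as induced by a morphism in the derived categories $D^b_c(-,\Q)$ attached to diagram (\ref{diagram}) (see Prop.~\ref{map-boundary}), and then to obtain commutativity from the standard compatibilities of the six-functor formalism (\cite{kashiwara-schapira}).

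First I would construct the two horizontal ``residue'' maps. Since $R\Gamma(\mathcal{S}^*,-)\circ Rj_*=R\Gamma(\mathcal{S},-)$, one has $H^{2g-1}(\mathcal{S},F)=H^{2g-1}(\mathcal{S}^*,Rj_*F)$; the unit of adjunction $\mathrm{id}\to i_*i^*$ (with $i_*=Ri_*$, $i$ being a closed immersion), together with $R\Gamma(\mathcal{S}^*,-)\circ i_*=R\Gamma(\partial\mathcal{S}_0,-)$, then yields a restriction map $H^{2g-1}(\mathcal{S}^*,Rj_*F)\to H^{2g-1}(\partial\mathcal{S}_0,i^*Rj_*F)$. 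Because $\partial\mathcal{S}_0$ is zero-dimensional (Section \ref{siegel-var}), hence a finite set of points, for any $K\in D^b_c(\partial\mathcal{S}_0,\Q)$ the object $R\Gamma(\partial\mathcal{S}_0,K)$ is just the finite product of the stalks of $K$, so there is a canonical isomorphism $H^n(\partial\mathcal{S}_0,K)\simeq H^0(\partial\mathcal{S}_0,\mathcal{H}^n(K))$; applying this with $K=i^*Rj_*F$ and using that $i^*$ is exact (so that $\mathcal{H}^{2g-1}(i^*Rj_*F)=i^*R^{2g-1}j_*F$) produces the top horizontal arrow. The identical recipe with $(\mathcal{S},j,i,F)$ replaced by $(\mathcal{S}',j',i',\iota^*F)$, using that $\partial\mathcal{S}'$ is likewise zero-dimensional (Section \ref{hb-var}), gives the bottom arrow.

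Next I would identify the vertical maps. The left one is the pull-back $\iota^*$ in Betti cohomology. For the right one, the commutative left-hand square of (\ref{diagram}) furnishes the base-change morphism $\tilde\iota^*Rj_*F\to Rj'_*\iota^*F$ in $D^b_c(\mathcal{S}^{'*},\Q)$; taking $(2g-1)$-st cohomology sheaves gives $\tilde\iota^*R^{2g-1}j_*F\to R^{2g-1}j'_*\iota^*F$, and applying $i'^*$ together with the identification $i'^*\tilde\iota^*=\partial\iota^*i^*$ coming from the commutative right-hand square of (\ref{diagram}) gives a morphism $\partial\iota^*\big(i^*R^{2g-1}j_*F\big)\to i'^*R^{2g-1}j'_*\iota^*F$ of sheaves on $\partial\mathcal{S}'$; the right vertical arrow is the composite of the pull-back $\partial\iota^*$ on $H^0$ with the map induced by this morphism.

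Finally, commutativity of (\ref{diagram-coho}) is formal. Under the identifications above, the outer square breaks into (a) the compatibility of the restriction map along the closed immersion $i$ (resp. $i'$) with the base-change morphism for the left-hand square of (\ref{diagram}) — which, via the relation $i'^*\tilde\iota^*=\partial\iota^*i^*$ coming from the right-hand square, is a standard coherence of the units $\mathrm{id}\to i_*i^*$ and $\mathrm{id}\to Rj'_*j'^*$ — and (b) the naturality of the degeneration isomorphism $H^n(\partial\mathcal{S}_0,-)\simeq H^0(\partial\mathcal{S}_0,\mathcal{H}^n(-))$ under $\partial\iota^*$; both are immediate. The only point deserving attention is the exactness of the pull-back functors $i^*$, $i'^*$, $\tilde\iota^*$, $\partial\iota^*$, which is what lets the cohomology-sheaf functors commute with them. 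Accordingly the ``main obstacle'' is purely organisational — assembling the large diagram out of these elementary pieces — rather than any substantive difficulty.
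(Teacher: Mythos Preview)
Your proposal is correct and follows essentially the same route as the paper: both arguments assemble the square from the adjunctions and base-change morphisms of the six-functor formalism attached to diagram~(\ref{diagram}), and both finish by invoking the zero-dimensionality of $\partial\mathcal{S}_0$ and $\partial\mathcal{S}'$ to pass from hypercohomology in degree $2g-1$ to $H^0$ of the $(2g-1)$-st cohomology sheaf. The only organisational difference is a duality: the paper works entirely on $\mathcal{S}^*$ via pushforwards---it applies $Rj_*$ to the unit $F\to\iota_*\iota^*F$, then the unit $1\to i_*i^*$, and invokes the proper base change isomorphism $i^*\tilde\iota_*\simeq(\partial\iota)_*i'^*$ for the (cartesian) right square to obtain a single commutative square of complexes on $\mathcal{S}^*$, to which it applies $R^{2g-1}\Gamma(\mathcal{S}^*,-)$ in one stroke---whereas you work via pullbacks, using the (always-defined) base-change morphism $\tilde\iota^*Rj_*F\to Rj'_*\iota^*F$ for the left square and the commutativity $i'^*\tilde\iota^*=\partial\iota^*i^*$ for the right. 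These are adjoint packagings of the same verification, and neither buys anything the other does not.
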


\begin{proof} Let us consider the diagram (\ref{diagram}). Applying $j_*$ to the adjunction map $F \rightarrow \iota_* \iota^* F$ and using the commutativity of $(\ref{diagram})$, we obtain the map $j_* F \rightarrow \tilde{\iota}_* j'_* \iota^* F$. Composing with the adjunction $1 \rightarrow i_* i^*$, and using the proper base change theorem $i^* \tilde{\iota}_*=(\partial \iota)_* i^{' *}$, we obtain the commutative diagram
$$
\begin{CD}
j_* F @>>> i_*i^*j_* F\\
@VVV           @VVV\\
\tilde{\iota}_* j'_* \iota^* F @>>> i_* (\partial \iota)_* i^{'*} j'_* \iota^* F.
\end{CD}
$$
The proof of the statement of the lemma now follows by appliying the functor $R^{2g-1}p_*$, where $p: \mathcal{S}^* \rightarrow \Spec \C$ is the structural morphism, and using the fact that $\partial \mathcal{S}_0$ and $\partial \mathcal{S}'$ are of dimension zero.
\end{proof}

\begin{thm} \label{main}  Let $k \geq 2$ be an even integer. There exists a $n$-torsion section $t: \mathcal{S} \rightarrow \mathcal{A}$ such that the image of the Eisenstein class 
$$
\mathrm{Eis}^{gk}(t) \in H^{2g-1}(\mathcal{S}, \mathrm{Sym}^{gk} \mathcal{H}(g))
$$
under the map $H^{2g-1}(\mathcal{S}, \mathrm{Sym}^{gk} \mathcal{H}(g)) \rightarrow H^{0}(\partial \mathcal{S}_0, i^*R^{2g-1}j_* \mathrm{Sym}^{gk} \mathcal{H}(g))$
is non-zero.
\end{thm}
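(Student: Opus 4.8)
The plan is to pull everything back to the Hilbert--Blumenthal variety $\mathcal{S}'$ along the modular embedding $\iota$ and to reduce the statement to Blotti\`ere's theorem \cite{blottiere}. By Proposition \ref{carre-cartesien} (summed over connected components) the Hilbert--Blumenthal abelian scheme $\pi'$ of (\ref{univ-hb}) is the pull-back $\mathcal{A}'=\iota^*\mathcal{A}$ of $\mathcal{A}/\mathcal{S}$ along $\iota$, and since $f_{R'R}$ identifies the lattice $\mathcal{D}^{-1}\oplus\mathcal{O}$ with $\Z^{2g}$, every $n$-torsion section of $\pi'$ coming from a torsion point of $\tfrac1n(\mathcal{D}^{-1}\oplus\mathcal{O})/(\mathcal{D}^{-1}\oplus\mathcal{O})$ is the pull-back along $\iota$ of a non-zero $n$-torsion section of $\mathcal{A}/\mathcal{S}$. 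Given such a section $t:\mathcal{S}\to\mathcal{A}$, I would write $t':\mathcal{S}'\to\mathcal{A}'$ for its pull-back; since $\iota^*\mathcal{H}\simeq\mathcal{H}'$, Corollary \ref{chgt-de-base-eisenstein} gives
$$
\iota^*\,\mathrm{Eis}^{gk}(t)=\mathrm{Eis}^{gk}(t')\quad\text{in}\quad H^{2g-1}(\mathcal{S}',\mathrm{Sym}^{gk}\mathcal{H}'(g)).
$$

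Next I would apply Lemma \ref{dc} to the constructible sheaf $\mathrm{Sym}^{gk}\mathcal{H}(g)$. The horizontal arrows of diagram (\ref{diagram-coho}) are residue maps, well defined because $\partial\mathcal{S}_0$ and $\partial\mathcal{S}'$ are zero-dimensional; I denote them $r$ and $r'$. As the left vertical arrow is $\iota^*$, commutativity of (\ref{diagram-coho}) applied to $\mathrm{Eis}^{gk}(t)$ shows that the right vertical arrow sends $r(\mathrm{Eis}^{gk}(t))$ to $r'(\iota^*\mathrm{Eis}^{gk}(t))=r'(\mathrm{Eis}^{gk}(t'))$. In particular, if $r'(\mathrm{Eis}^{gk}(t'))\neq 0$, then $r(\mathrm{Eis}^{gk}(t))\neq 0$, which is the assertion of the theorem. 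Thus it is enough to exhibit one non-zero $n$-torsion section $t'$ of $\pi'$ whose Eisenstein class $\mathrm{Eis}^{gk}(t')$ has non-zero residue at the zero-dimensional boundary $\partial\mathcal{S}'$.

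This is where \cite{blottiere} and the hypotheses on $k$ enter. After extension of scalars along the embeddings $\sigma_i:F\to\R$, the $\mathcal{O}$-action on the Hilbert--Blumenthal abelian scheme decomposes $\mathrm{Sym}^{gk}\mathcal{H}'$ into summands $\bigotimes_{i=1}^g\mathrm{Sym}^{m_i}\mathcal{H}_i$ with $\sum_i m_i=gk$; the ``balanced'' summand $m_1=\cdots=m_g=k$ is stable under the Galois action permuting the $\sigma_i$, so it descends to a $\Q$-direct summand $W$ of $\mathrm{Sym}^{gk}\mathcal{H}'$, and the image of $\mathrm{Eis}^{gk}(t')$ in $H^{2g-1}(\mathcal{S}',W(g))$ is the Hilbert--Blumenthal Eisenstein class studied in \emph{loc.\ cit.} By Blotti\`ere's computation its residue at $\partial\mathcal{S}'$ equals an elementary factor — a value at $t'$ of a Bernoulli-type polynomial, which can be made non-zero by a suitable choice of $t'$ — times a special value of the Dedekind zeta function $\zeta_F$; since $k$ is even and $\ge 2$, this value sits at a negative odd integer, where $\zeta_F$ is a non-zero rational number by the Klingen--Siegel theorem. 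Hence $r'(\mathrm{Eis}^{gk}(t'))\neq 0$ and the theorem follows. I expect the main obstacle to be carrying out this last paragraph precisely: identifying the abstract class $\mathrm{Eis}^{gk}(t')$ of Definition \ref{eis-ab} with Blotti\`ere's explicitly computed class, pinning down the summand $W$ on which his residue formula lives, and checking that the torsion section can be chosen so that the elementary prefactor does not vanish. Everything else reduces to formal properties of the six functors together with the base-change results of Section 2.
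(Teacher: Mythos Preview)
Your proof is correct and follows essentially the same approach as the paper: pull back along the modular embedding $\iota$, use Corollary~\ref{chgt-de-base-eisenstein} and Lemma~\ref{dc} to reduce to the Hilbert--Blumenthal case, then invoke Blotti\`ere's residue computation. The paper differs only cosmetically---it first passes to a connected component of $\mathcal{S}$ before applying Proposition~\ref{carre-cartesien}, and it absorbs your entire last paragraph into a bare citation of \cite{blottiere} Thm.~5.2 and Cor.~5.4 rather than unpacking the balanced summand or the Klingen--Siegel input.
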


\begin{proof} Let us consider a connected component $\Gamma(n) \backslash \mathfrak{H}_g^+$ of $\mathcal{S}$. As explained at the end of section \ref{siegel-var}, we have the principally polarized abelian scheme 
$$
\pi: \Lambda(n) \backslash (\C^g \times \mathfrak{H}_g^+) \rightarrow \Gamma(n) \backslash \mathfrak{H}_g^+
$$
Hence, by the modular interpretation of $\mathcal{S}$, we have a cartesian square
$$
\begin{CD}
\Lambda(n) \backslash (\C^g \times \mathfrak{H}_g^+) @>>> \mathcal{A}\\
@V\pi VV                                                                                          @VVV\\
\Gamma(n) \backslash \mathfrak{H}_g^+ @>>> \mathcal{S}.
\end{CD}
$$
By Cor. \ref{chgt-de-base-eisenstein}, it is enough to show that there exists $s$ a $n$-torsion section of $\pi$ such that $\mathrm{Eis}^{gk}(s)$ has non-zero image under the map $$H^{2g-1}(\mathcal{S}, \mathrm{Sym}^{gk} \mathcal{H}(g)) \rightarrow H^{0}(\partial \mathcal{S}_0, i^*R^{2g-1}j_* \mathrm{Sym}^{gk} \mathcal{H}(g)).$$ Let us consider a connected component $\Gamma'(n) \backslash \mathfrak{H}^{g +}$ of $\mathcal{S}'$, which is the base of the abelian scheme
$$
\pi': \Lambda'(n) \backslash (\C^g \times \mathfrak{H}^{g +}) \rightarrow \Gamma'(n) \backslash \mathfrak{H}^{g +}.
$$
According to Prop. \ref{carre-cartesien}, we have a cartesian square
$$
\begin{CD}
\Lambda'(n) \backslash (\C^g \times \mathfrak{H}^{g +}) @>  f_ {R' R}\times {\iota} >> \Lambda(n) \backslash (\C^g \times \mathfrak{H}_g^+)\\
@V\pi' VV                                                                                                                    @V\pi VV\\                                                                                         
\Gamma'(n) \backslash \mathfrak{H}^{g +} @>{\iota}>> \Gamma(n) \backslash \mathfrak{H}_g^+.
\end{CD}
$$
Hence, by Cor. \ref{chgt-de-base-eisenstein} and thanks to the commutative diagram (\ref{diagram-coho}), it is enough to show that there exists a torsion section $t'$ of $\pi'$ such that the associated Eisenstein class $\mathrm{Eis}^{gk}(t')$ has non-zero image under the map
$$H^{2g-1}(\mathcal{S}', \mathrm{Sym}^{gk} \mathcal{H}'(g)) \rightarrow H^{0}(\partial \mathcal{S}', i^*R^{2g-1}j_* \mathrm{Sym}^{gk} \mathcal{H}'(g)).$$
This is a direct consequence of \cite{blottiere} Thm. 5.2 and Cor. 5.4.
\end{proof}

\begin{rems} It would be interesting to know if the image of the $k$-th Eisenstein class $\mrm{Eis}^k(t)$ under the residue map $H^{2g-1}(\mathcal{S}, \mathrm{Sym}^{k} \mathcal{H}(g)) \rightarrow H^{0}(\partial \mathcal{S}_0, i^*R^{2g-1}j_* \mathrm{Sym}^{k} \mathcal{H}(g))$ is zero or not, without assuming that $k$ is even and divisible by $g$. What about the residue on higher dimensional strata of the Baily-Borel compactification?
\end{rems}

\begin{cor} Let $k \geq 2$ be an even integer. Let $\mathcal{A}$ be the universal abelian scheme over $\mathcal{S}$ and let $\mathcal{A}^{gk}$ be the $gk$-th fold fiber product over $\mathcal{S}$. Then, the higher regulator map
$$
H^{gk+2g-1}_\mathcal{M}(\mathcal{A}^{gk}, \Q(gk+g)) \rightarrow H^{gk+2g-1}(\mathcal{A}^{gk}, \Q(gk+g))
$$
from motivic cohomology to Betti cohomology is non-zero.
\end{cor}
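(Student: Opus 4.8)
The plan is to exhibit $\mathrm{Eis}^{gk}(t)$ as the Betti realization of a class lying in $H^{gk+2g-1}_{\mathcal{M}}(\mathcal{A}^{gk},\Q(gk+g))$ and then to apply Theorem \ref{main}. First I would pass to the motivic polylogarithm: by Wildeshaus's construction of the motivic polylogarithm of abelian schemes \cite{wildeshaus} (see also \cite{kings1}) there is a class in motivic cohomology whose Betti realization is the polylogarithm extension class $\mathrm{Pol}_{\mathcal{A}}$, and applying the construction of Definition \ref{eis-ab} to it along the $n$-torsion section $t$ yields a motivic Eisenstein class
$$
\mathrm{Eis}^{gk}_{\mathcal{M}}(t)\in H^{2g-1}_{\mathcal{M}}(\mathcal{S},\mathrm{Sym}^{gk}\mathcal{H}(g))
$$
whose Betti realization is $\mathrm{Eis}^{gk}(t)$. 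Here motivic cohomology with coefficients in the relative motive $\mathrm{Sym}^{gk}\mathcal{H}$ means the direct summand of the motivic cohomology of $\mathcal{A}^{gk}$ cut out by the relative K\"unneth projectors of $\mathcal{A}^{gk}/\mathcal{S}$ together with the symmetrizing idempotent permuting the $gk$ factors; these idempotents are represented by algebraic correspondences (coming respectively from the multiplication-by-$n$ endomorphisms and from permutations of factors), so they act on motivic and on Betti cohomology compatibly with the regulator.

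Next I would match this summand with the group appearing in the statement, which amounts to tracking Tate twists. Since $\mathcal{A}/\mathcal{S}$ is principally polarized, relative Poincar\'e duality gives a canonical isomorphism $\mathcal{H}\simeq h^1(\mathcal{A}/\mathcal{S})(1)$ of relative motives, hence $\mathrm{Sym}^{gk}\mathcal{H}(g)\simeq\bigl(\mathrm{Sym}^{gk}h^1(\mathcal{A}/\mathcal{S})\bigr)(gk+g)$, and the right-hand side is a direct summand of $h^{gk}(\mathcal{A}^{gk}/\mathcal{S})(gk+g)$, obtained by choosing $h^1$ in each of the $gk$ factors and symmetrizing. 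The Chow--K\"unneth decomposition of the relative motive of $\pi_{gk}:\mathcal{A}^{gk}\to\mathcal{S}$ (whose graded pieces are the eigenspaces of $[n]^{*}$) makes the Leray spectral sequence for $\pi_{gk}$ degenerate, both in motivic and in Betti cohomology and compatibly with the regulator, and thereby exhibits $H^{2g-1}_{\mathcal{M}}(\mathcal{S},\mathrm{Sym}^{gk}\mathcal{H}(g))$ as a direct summand of $H^{gk+2g-1}_{\mathcal{M}}(\mathcal{A}^{gk},\Q(gk+g))$, with the analogous statement, compatible with the regulator, on the Betti side.

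Finally I would conclude: by Theorem \ref{main} the $n$-torsion section $t$ can be chosen so that $\mathrm{Eis}^{gk}(t)$ has non-zero residue at $\partial\mathcal{S}_{0}$, hence $\mathrm{Eis}^{gk}(t)\neq 0$ in $H^{2g-1}(\mathcal{S},\mathrm{Sym}^{gk}\mathcal{H}(g))$; since $\mathrm{Eis}^{gk}(t)$ is the image of $\mathrm{Eis}^{gk}_{\mathcal{M}}(t)$ under the Betti regulator and both classes lie in the matching direct summands above, the regulator map $H^{gk+2g-1}_{\mathcal{M}}(\mathcal{A}^{gk},\Q(gk+g))\to H^{gk+2g-1}(\mathcal{A}^{gk},\Q(gk+g))$ is non-zero. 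The step I expect to require the most care is the first one: making precise that Wildeshaus's motivic polylogarithm realizes to the purely Betti-theoretic class $\mathrm{Pol}_{\mathcal{A}}$ used here --- equivalently, that the properties characterizing the polylogarithm are preserved by the Betti realization --- and that the idempotent decomposition of $h(\mathcal{A}^{gk}/\mathcal{S})$ is respected by realization; once these are in hand, the degeneration of the Leray spectral sequence and the Tate-twist bookkeeping are routine.
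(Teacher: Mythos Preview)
Your proposal is correct and follows essentially the same route as the paper: the paper's proof invokes ``Liebermann's trick'' to embed $H^{2g-1}(\mathcal{S},\mathrm{Sym}^{gk}\mathcal{H}(g))$ into $H^{gk+2g-1}(\mathcal{A}^{gk},\Q(gk+g))$ (which is exactly your K\"unneth/symmetrizing idempotent argument, spelled out), cites \cite{kings1} for the fact that the Eisenstein classes lie in the image of the regulator, and then applies Theorem~\ref{main}. Your caution about the first step is addressed in the paper simply by the reference to \cite{kings1}, where the compatibility of the motivic polylogarithm with its realizations is established.
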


\begin{proof} By Liebermann's trick, the space $H^{gk+2g-1}(\mathcal{A}^{gk}, \Q(gk+g))$ contains the space $H^{2g-1}(\mathcal{S}, \mathrm{Sym}^{gk} \mathcal{H}(g))$. Hence, the statement follows from Thm. \ref{main} and the fact that Eisenstein classes belong to the image of the higher regulator map, according to \cite{kings1}.
\end{proof}


\begin{thebibliography}{9999}


\bibitem[BaK]{bannai-kings} K. Bannai, G. Kings, {\it $p$-adic elliptic polylogarithm, $p$-adic Eisenstein series and Katz measure}, Amer. Journ. of Math. 132, (2010), no. 6, 1609-1654.

\bibitem[BL1]{beilinson-levin1} A. Beilinson, A. Levin, {\it The elliptic polylogarithm}, in Jannsen U. et al. eds., Motives, Proc. Symp. Pure Math. vol. 55, Part 2, (1994), 123-190.

\bibitem[BL2]{beilinson-levin2} A. Beilinson, A. Levin, {\it The elliptic polylogarithm}, preprint version of \cite{beilinson-levin1}.

\bibitem[BiL]{birkenhake-lange} C. Birkenhake, H. Lange, {\it Complex abelian varieties. Second edition}, Grundlehren der mathematischen Wissenschaften 302, Springer, (2004), xii+635.

\bibitem[Bl1]{blottiere1} D. Blotti\`ere, {\it R\'ealisation de Hodge du polylogarithme d'un sch\'ema ab\'elien}, Journal of the Inst. of Math. Jussieu 8 (1), (2009), 1-38.

\bibitem[Bl2]{blottiere} D. Blotti\`ere, {\it Les classes d'Eisenstein des vari\'et\'es de Hilbert-Blumenthal}, IMRN No. 17, (2009), 3236-3263.

\bibitem[Bo]{borel} A. Borel, {\it Some metric properties of arithmetic quotients of symmetric spaces and an extension theorem}, J. Differential Geometry 6, (1972), 543-560.

\bibitem[BuW]{burgos-wildeshaus} J. I. Burgos, J. Wildeshaus, {\it Hodge modules on Shimura varieties and their higher direct images in the Baily-Borel compactification}, Ann. Sci. Ecole Norm. Sup. (4) 37, (2004), no. 3, 363-413.

\bibitem[F]{faltings} G. Faltings, {\it Arithmetic Eisenstein classes on the Siegel space: some computations}, in Number fields and function fields-two parralel worlds, 133-166, Prog. Math. 239, Birkh\"auser, Boston, Boston MA, (2005).



\bibitem[KaS]{kashiwara-schapira} M. Kashiwara, P. Schapira, {\it Sheaves on manifolds}, A Series of Comprehensive Studies in Mathematics, 292, Springer-Verlag (1994), x+512. 

\bibitem[Ki1]{kings1} G. Kings, {\it $K$-theory elements of the polylogarithm of abelian schemes}, J. Reine Angew. Math. 517, (1999), 103-116.


\bibitem[Ki3]{kings3} G. Kings, {\it The Tamagawa number conjecture for CM elliptic curves}, Invent. Math. 143, (2001), no.3, 571-627.
    
\bibitem[L]{laumon} G. Laumon, {\it Fonctions z\^etas des vari\'et\'es de Siegel de dimension $3$}, Ast\'erisque No. 302, (2005), 1-66.

\bibitem[M]{morel} S. Morel, {\it Complexes pond\'er\'es sur les compactifications de Baily-Borel: le cas des vari\'et\'es de Siegel}, J. Amer. Math. Soc. 21, (2008), no. 1, 23-61.

\bibitem[P]{pink} R. Pink, {\it Arithmetical compactifications of mixed Shimura varieties}, PhD Thesis, Bonn, (1990), available at https://people.math.ethz.ch/~pink/dissertation.html.





\bibitem[W]{wildeshaus} J. Wildeshaus, {\it Realization of polylogarithms}, LNM 1650, Springer, (1995), xii+343.

\end{thebibliography}
\end{document}